\newtheorem{theorem}{Theorem}[section]
\newtheorem{lemma}{Lemma}[section]
\newtheorem{definition}{Definition}[section]
\newtheorem{example}{Example}[section]
\begin{document}
	
	\begin{center}\Large
		%\textbf{ DNA Codes from Double cyclic codes over $\mathbb{F}_4+v\mathbb{F}_4$}\\
		%\textbf{ Double cyclic codes over $\mathbb{F}_4+v\mathbb{F}_4$ and its Application to DNA Codes}\\
		\textbf{Reversible and Reversible-Complement Double Cyclic Codes over $\mathbb{F}_4+v\mathbb{F}_4$ and its Application to DNA Codes}
	\end{center}

	%%%%%%%%%%%%%%%%%%%%%%%%%%%%%%%%%%%%%%%%%%%%%%%%%%
    
	\begin{center}
		Divya Acharya$^{1}$, Prasanna Poojary$^{1}$, Vadiraja Bhatta G R$^{2}$
		\end{center}
	
	\begin{center}
		$^1$Department of Mathematics, Manipal Institute of Technology Bengaluru, Manipal Academy of Higher Education, Manipal, India\\ 
		% {\it \textcolor{blue}{\underline{acharyadivya1998@gmail.com; poojary.prasanna@manipal.edu; poojaryprasanna34@gmail.com}}}\\
		$^2$ Department of Mathematics, Manipal Institute of Technology, Manipal Academy of Higher Education, Manipal, India\\ 
		{\it \textcolor{blue}{acharyadivya1998@gmail.com; }}{\it \textcolor{blue}{poojary.prasanna@manipal.edu;}}\\
		{\it \textcolor{blue}{poojaryprasanna34@gmail.com;  vadiraja.bhatta@manipal.edu}}\\
		\end{center}
	
	\abstract{ In this article, we study the algebraic structure of double cyclic codes of length $(m, n)$ over $\mathbb{F}_4$ and we give a necessary and sufficient condition for a double cyclic code over $\mathbb{F}_4$ to be reversible. Also, we determine the algebraic structure of double cyclic codes of length $(m, n)$ over $\mathbb{F}_4+v\mathbb{F}_4$ with $v^2=v$, satisfying the reverse constraint and the reverse-complement constraint. Then we establish a one-to-one correspondence $\psi$ between the 16 DNA double pairs $S_{D_{16}} $ and the 16 elements of the finite ring $\mathbb{F}_4+v\mathbb{F}_4$.
     We also discuss the GC-content of DNA double cyclic codes.
     }\\
     
	\noindent \textbf{Keywords:} Double cyclic codes; Reversible codes;  reversible-complement codes; DNA codes; Watson-Crick complement; The GC content\\
	
	% $^{\star}$Corresponding Author: Prasanna Poojary

%%%%%%%%%%%%%%%%%%%%%%%%%%%%%%%%%%%%%%%%%%%%%%%%%%%%%%%
	
	\section{Introduction}
    Cyclic codes are significant linear codes recognized for their algebraic structure and practical usability. While cyclic codes over finite fields have been extensively studied, attention towards cyclic codes over finite rings increased after Hammons et al.\cite{hammons1994z}, which revealed good non-linear binary codes as Gray images of linear codes over $\mathbb{Z}_4$. This work spurred further investigation into the structure of cyclic codes across various finite rings.

    Borges et al. \cite{Borges2018double} investigated double cyclic codes over ${\mathbb{Z}}_2$. Then, double cyclic codes over $\mathbb{Z}_2+u\mathbb{Z}_2$ are studied in \cite{aydogdu2024double}. Research interest in the algebraic structure of double cyclic codes over various finite fields and finite rings has increased significantly. Then the double cyclic code over $\mathbb{F}_4+v\mathbb{F}_4$ with $v^2=v$ is studied  by Bathala and Seneviratne \cite{bathala2021some}.
    
	Deoxyribonucleic acid (DNA) is an essential component of all living things as the genetic information carrier. It is composed of strands that are connected and twisted into the shape of a double helix. Each strand is a sequence made up of four distinct nucleotides: two purines: Adenine (A) and Guanine (G), and two pyrimidines: Thymine (T ) and Cytosine (C). The ends of a single DNA strand are chemically polar with the so-called $5^\prime$ end and the $3^\prime$ end, which implies that the strands are oriented. Base pairing is the process by which two strands of DNA bind to produce a double strand. The strands are linked following the Watson–Crick model. According to this model, each purine base is paired with a unique pyrimidine base and vice versa. In particular, every A is paired with a T and every C is paired with a G and vice versa. The paired nucleotides are called Watson–Crick complement(WCC) of each other. If we denote the WCC of $x$ as $\overline{x}$ then we have $\overline{A} = T$, $ \overline{T} = A$, $\overline{C} = G$, $\overline{G} = C$. DNA strand pairing occurs in the opposite direction and in the reverse order. For instance, the Watson–Crick complementary (WCC) strand of $3^\prime$-ATTCGGC-$5^\prime$ is the strand $5^\prime$-GCCGAAT-$3^\prime$.
    
    In 1994, Adleman \cite{adleman1994molecular} started DNA computing. Adleman used DNA molecules to solve an example of an NP-complete problem in his work. In order to study DNA computing using the algebraic coding theory techniques, researchers studied error-correcting codes over finite fields and finite rings of cardinality $4^n$ by mapping the DNA nucleotides to the elements of finite fields and finite rings. Algebraic theory and application have relied heavily on the widely studied class of algebraic codes known as cyclic codes. Following that, studies on cyclic DNA codes and their generalizations advanced quickly. Cyclic DNA codes over finite rings and fields play a key role in DNA computation. 

    Gaborit and King constructed DNA codes over GF(4) \cite{gaborit2005linear} and Abualrub et al. \cite{abualrub2006construction}, in 2005 and 2006, respectively. Later, Siap et al. \cite{siap2009cyclic} studied DNA codes over the finite ring $\mathbb{F}_2[u]/\langle u^2 -1 \rangle $ with four elements. Guenda et al.\cite{guenda2013construction} constructed DNA codes over $\mathbb{F}_2+u\mathbb{F}_2$ with $u^2= 0$. 
    
    Yildiz and Siap in \cite{yildiz2012cyclic} studied DNA codes over the ring $\mathbb{F}_2[u]/\langle u^4 -1 \rangle$ with 16 elements. This article investigates the algebraic properties of cyclic DNA codes and matches sixteen elements of this ring with a set of paired DNA nucleotides. Later, Bayram et al. covered certain DNA applications of linear codes over the ring $\mathbb{F}_4+v\mathbb{F}_4$ $(v^2=v)$ with 16 elements in \cite{bayram2016codes}. Several authors have also made considerable use of cyclic codes in the construction of DNA codes \cite{NabilBennenni, dinh2018cyclic, dinh2018dna, liu2020dna}. Recently, Kanlaya and Klin-Eam \cite{kanlaya2023constructing} studied DNA double cyclic codes of length $(\alpha,  \beta)$ over $\mathbb{F}_2+u\mathbb{F}_2$ $( u^2 = 0)$, where $\alpha$ and $\beta$ are odd positive integers, and in \cite{kanlaya2025constructing}, they extended this work to $\alpha$ and $\beta$ are odd  and even positive integers respectively. Motivated by these works, we examine DNA codes over the finite ring of 16 elements, namely, $\mathbb{F}_4+v\mathbb{F}_4$ with $v^2=v$ in this paper.
%Inspired by these works, in this paper, we study DNA codes over the finite ring of 16 elements, namely, $\mathbb{F}_4+v\mathbb{F}_4$ with $v^2=v$.

    The article is organized as follows: Section \ref{Sec2} includes some basic background. In Section \ref{Sec3}, we give some definitions and the algebraic structure of the double cyclic code over $\mathbb{F}_4$. We provide the necessary and sufficient conditions for the double cyclic codes over $\mathbb{F}_4$ to be reversible. Section \ref{Sec4} provides the algebraic structure of the double cyclic code over $\mathbb{F}_4+v\mathbb{F}_4$ with $v^2=v$. In Section \ref{Sec5}, we first establish a one-to-one
    correspondence $\psi$ between the 16 DNA double pairs $S_{D_{16}}= \{AA, AT, AG, AC, TT, T A, TG, TC, GG, GA, GC, GT, CC, CA, CG, CT \} $ and the 16 elements of the finite ring $\mathbb{F}_4+v\mathbb{F}_4$. Also, we give a map $\theta $ from double cyclic codes over $\mathbb{F}_4+v\mathbb{F}_4$ of length $(m,n)$ to
    $S^{2m}_{D_{4}}\times S^{2n}_{D_{4}}$, where $m$ and $n$ are two odd positive integers. Then we study double cyclic codes over $\mathbb{F}_4+v\mathbb{F}_4$ satisfying the reverse constraint and reverse complement
    constraint. In Section \ref{Sec6} we study the GC content of double cyclic codes over $\mathbb{F}_4+v\mathbb{F}_4$. 
    % Finally, we conclude our work in Section \ref{Sec7}.

	\section{Preliminaries}\label{Sec2}
	Let us consider the finite field with four element $ \mathbb{F}_4 = \{0, 1,\gamma, \gamma + 1\}$, where
	$\gamma^2 = \gamma + 1$. Let $\mathcal{R}$ denote the commutative ring $\mathbb{F}_4+v\mathbb{F}_4=\{a+vb: a,b\in \mathbb{F}_4\}$ with $v^2=v$. $\mathcal{R}$ is a finite non-chain ring with characteristic two and $\vert \mathcal{R} \vert=16$. The invertible elements in $\mathcal{R}$ are $\{1,\gamma,\gamma+1, v+\gamma, 1+v+\gamma, 1+v\gamma, 1+v+v\gamma, 1+\gamma+v\gamma, v+\gamma+v\gamma\}$
	and the non-invertible elements are $\{0, v, v\gamma, 1+v, v+v\gamma,\gamma+v\gamma, 1+v+\gamma+v\gamma\}$. Further, $\mathcal{R}$ is a semilocal Frobenius ring with two maximal ideals $\langle v \rangle$ and  $\langle v+1 \rangle$.
	
	A code $\mathcal{C}$ of length $n$ over $\mathcal{R}$ is a nonempty subset of $\mathcal{R}^n$. An element of $\mathcal{C}$ is called a codeword. $\mathcal{C}$ is called a  linear code over $\mathcal{R}$ if $\mathcal{C}$ is an $\mathcal{R}$-submodule of $\mathcal{R}^n$. 
	%	Let $\alpha$ be a unit of $\mathcal{R}$.  The $\alpha$-constacyclic shift $\sigma_{\alpha}$ on $\mathcal{R}^n$ is  defined by
    %	\begin{center}
    %		$\sigma_{\alpha}(\zeta_0,  \zeta_1,\ldots,  \zeta_{n-1}) = (\alpha \zeta_{n-1},  \zeta_0,\ldots, \zeta_{n-2}).$
    %	\end{center}
    %	A code $\mathcal{C}$ is said to be $\alpha$-constacyclic if $\mathcal{C}$ is closed under the operator $\sigma_{\alpha}$.
    %	If  $\alpha$ is equal to 1(or -1), then the $\alpha$-constacyclic codes are referred to as cyclic (or negacyclic) codes.
	In the study of cyclic codes, it is useful to identify a vector  $v = (v_0, v_1, \ldots,  v_{n-1})$ in $\mathcal{R}^n$ with the polynomial  $v(x) = v_ 0+ v_1x +\cdots+v_{n-1}x^{n-1}$. It is well-known that a linear code $\mathcal{C}$ of length $n$ over $\mathcal{R}$ is an cyclic if and only if $\mathcal{C}$ is an ideal of $\frac{\Re[x]}{\langle x^n-1 \rangle}$.	The Hamming weight of $c$, represented as $wt_H(c)$, is the nonzero entries of a codeword $c$. The Hamming distance $d_H(c,c^{\prime})$, of two
	words $c$ and $c^{\prime}$, equals the number of components in which they differ. i.e., $d_H(c,c^{\prime})=wt_H(c-c^{\prime})$. 
	The Hamming distance of code $\mathcal{C}$ contains at least two words, $d_H(\mathcal{C}) = min\{d(c, c^{\prime}): c,c^{\prime} \in \mathcal{C}, c \neq c^{\prime}\}$.
	
	There is a mapping known as the Gray map that converts linear codes over $\mathcal{R}$ to linear codes over $ \mathbb{F}_4$, which is a crucial characteristic of codes over the ring $\mathcal{R}$. For any $a+vb\in \mathcal{R}$ the Gray map from $\mathcal{R}$ to $\mathbb{F}^2_4$ is defined by
	\begin{align}\label{eqn6}
	\notag	\phi:\mathcal{R}& \longrightarrow \mathbb{F}^2_4\\
		a+vb&\longmapsto (a+b, a).
	\end{align}
	
	For any polynomial $f (x) = a_0 + a_1x +\cdots+ a_kx^k \in \mathcal{R}[x]$,  where $a_k\neq 0$,  the reciprocal polynomial of $f (x)$ is defined as
	\begin{equation*}
		f (x)^* =x^kf(x^{-1})= a_k + a_{k-1}x +\cdots+a_0x^k. 
	\end{equation*}

 %    \hl{Citation}
	% %\hl{\cite{chen2016constacyclic}}
	% \begin{lemma}\label{lemma1}
	% 	Let $g(x)$ and $h(x)$ be any two polynomials in $\mathcal{R}$. Then 
	% 	\begin{enumerate}
	% 		\item If $deg(g(x)) \geq deg(h(x))$,  then $(g(x) + h(x))^{*} =g^{*}(x) + x^{deg(g(x))-deg(h(x))}h^{*}(x).$
	% 		\item  $(g(x)h(x))^{*}=g^*(x)h^{*}(x).$
	% 	\end{enumerate}
	% \end{lemma}

	\section{$\mathbb{F}_4$-double cyclic code}\label{Sec3}
	Let $\mathcal{C}$ be a binary code of length $\ell$. Let $m$ and $n$ be non-negative integers such that $\ell = m +n$.
	We consider a partition of the set of the $n$ coordinates into two subsets of $m$ and $n$ coordinates
	respectively, so that $\mathcal{C}$ is a subset of $\mathbb{F}^m_4\times \mathbb{F}^n_4$.
	
	\begin{definition}
		Let $\mathcal{C}$ be a binary linear code of length $\ell = m + n$. The code $\mathcal{C}$ is called
		$\mathbb{F}_4$-double cyclic code if
		\begin{align*}
			&(\alpha_{0}, \alpha_{1},\ldots,  \alpha_{m-1}\vert \beta_{0}, \beta_{1},\ldots,  \beta_{n-1})\in \mathcal{C}\\
			\implies&
			(\alpha_{m-1}, \alpha_{0},\ldots,  \alpha_{m-2}\vert \beta_{n-1}, \beta_{0},\ldots,  \beta_{n-2})\in \mathcal{C}.
		\end{align*}

	\end{definition}
	Let $\gamma=(\alpha_{0}, \alpha_{1},\ldots,  \alpha_{m-1}\vert \beta_{0}, \beta_{1},\ldots, \beta_{n-1})$ be a codeword in $\mathcal{C}$ and let $i$ be an integer. Then the $i^{th}$ shift of $\boldsymbol{\delta}$ is given by
	\begin{equation*}
		\boldsymbol{\delta}^{(i)}=(\alpha_{0-i}, \alpha_{1-i},\ldots,  	\alpha_{m-1-i}\vert \beta_{0-i}, \beta_{1-i},\ldots,  \beta_{n-1-i}),
	\end{equation*}
	where the subscripts are read modulo $m$ and $n$, respectively.

	Let $\mathcal{C} \subseteq \mathbb{F}^m_4\times \mathbb{F}^n_4$ be a $\mathbb{F}_4$-double cyclic code. Let $\mathcal{C}_m$ be the canonical projection of $\mathcal{C}$
	on the first $m$ coordinates and $\mathcal{C}_n$ on the last $n$ coordinates. Note that $\mathcal{C}_m$ and $\mathcal{C}_n$ are binary
	cyclic codes of length $m$ and $n$, respectively. The code $\mathcal{C}$ is called separable if it is the direct
	product of $\mathcal{C}_m$ and $\mathcal{C}_n$.
	
	There is a bijective map between $\mathbb{F}^m_4\times \mathbb{F}^n_4$ and $\frac{\mathbb{F}_4[x]}{\langle x^m-1 \rangle }\times \frac{\mathbb{F}_4[x]}{\langle x^n-1\rangle }$ given
	by:
	\begin{equation*}
		(\alpha_{0}, \alpha_{1},\ldots,  \alpha_{m-1}\vert \beta_{0}, \beta_{1},\ldots,  \beta_{n-1})\mapsto (\alpha_{0}+ \alpha_{1}x+\cdots+ \alpha_{m-1}x^{m-1}\vert \beta_{0}+ \beta_{1}x+\cdots+  \beta_{n-1}x^{n-1}).
	\end{equation*}
	We denote the image of the vector $\boldsymbol{\delta}$ by $\boldsymbol{\delta}(x)$.

	\begin{definition}
		Let $\mathcal{A}_{m,n}$ denote the ring $\frac{\mathbb{F}_4[x]}{\langle x^m-1 \rangle }\times \frac{\mathbb{F}_4[x]}{\langle x^n-1\rangle }$. The operation 
		\begin{equation*}
			\star:\mathbb{F}_4[x]\times \mathcal{A}_{m,n} \longrightarrow \mathcal{A}_{m,n}
			\end{equation*}
		defined as 
		\begin{equation*}
				\lambda (x) \star (p(x) \vert q(x)) = (\lambda(x)p(x) \vert \lambda(x)q(x))
		\end{equation*}
		where $\lambda (x) \in \mathbb{F}_4[x]$ and $(p(x) \vert q(x)) \in \mathcal{A}_{m,n}$.
	\end{definition}
	The ring $\mathcal{A}_{m,n}$ with the external operation $\star$ is a $\mathbb{F}_4[x]$-module. 
	
 	Let $\boldsymbol{\delta}(x)=(\alpha(x) \vert \beta(x))$
	be an element of $\mathcal{A}_{m,n}$. Note that if we operate $\boldsymbol{\delta}(x)$ by $x$ we get
	\begin{align*}
		x\star \boldsymbol{\delta}(x)&=x \star (\alpha(x) \vert \beta(x))\\
		&=x \star(\alpha_{0}+ \alpha_{1}x+\cdots+ \alpha_{m-1}x^{m-1}\vert \beta_{0}+ \beta_{1}x+\cdots+  \beta_{n-1}x^{n-1})\\
		&=(\alpha_{0}x+ \alpha_{1}x^2+\cdots+\alpha_{m-2}x^{m-1}+ \alpha_{m-1}x^{m}\vert \beta_{0}x+ \beta_{1}x^2+\cdots+  \beta_{n-2}x^{n-1}+\beta_{n-1}x^{n})\\
		&=(\alpha_{m-1}x^{m}+\alpha_{0}x+ \alpha_{1}x^2+\cdots+\alpha_{m-2}x^{m-1} \vert \beta_{n-1}x^{n}+\beta_{0}x+ \beta_{1}x^2+\cdots+  \beta_{n-2}x^{n-1}).
	\end{align*}
	Hence, $x\star \boldsymbol{\delta}(x)$ is the image of the vector $\boldsymbol{\delta}^{(1)}$. Thus, the operation of $\boldsymbol{\delta}(x)$ by $x$ in $\mathcal{A}_{m,n}$
	corresponds to a shift of $\boldsymbol{\delta}$. In general, $x^i  \boldsymbol{\delta}(x) = \boldsymbol{\delta}^{(i)}(x)$ for all $i$.

	The structure of $\mathbb{F}_4$-double cyclic code of length $(m, n)$ is given in \cite{bathala2021some}.

	\begin{theorem}\cite{bathala2021some}\label{thm1}
		Let $\mathcal{C}$ be an $\mathbb{F}_4$-double cyclic code of length $(m,n)$. Then $\mathcal{C} = \Big\langle(b(x) \vert 0), (l(x) \vert a(x)\Big\rangle$, where $l(x) \in F_4[x]$, $b(x)\vert(x^r-1)$ and $a(x)\vert(x^s-1)$. Moreover,
		\begin{enumerate}
			\item $deg(l(x)) <deg(b(x))$
			\item $b(x)$ divides $l(x)\frac{x^s-1}{a(x)}$
			\item If $deg(b(x)) = t_1 $ and $deg(a(x)) = t_2$, then $\mathcal{C}$ is spanned by $S_1 \cup S_2$, where $S_1 =\bigcup\limits_{i=0}^{r-t_1-1}x^i (b(x) \vert 0)$ and $S_2 =\bigcup\limits_{i=0}^{s-t_2-1}x^i (l(x) \vert a(x))$. Further, $\vert C\vert = 4^{r+s-t_1-t_2 }$.
		\end{enumerate}
	\end{theorem}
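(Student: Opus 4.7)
The plan is to realize $\mathcal{C}$ as an extension of two ordinary cyclic codes via a natural projection, and then polish the generators into the stated canonical form.

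First I would consider the $\mathbb{F}_4[x]$-module projection $\pi:\mathcal{A}_{m,n}\to \mathbb{F}_4[x]/\langle x^s-1\rangle$ given by $(\alpha(x)\,\vert\,\beta(x))\mapsto \beta(x)$. Since $\mathcal{C}$ is stable under the $\star$-action by $x$, it is an $\mathbb{F}_4[x]$-submodule of $\mathcal{A}_{m,n}$, so $\pi(\mathcal{C})$ is an ideal of $\mathbb{F}_4[x]/\langle x^s-1\rangle$ and hence a cyclic code, generated by a monic divisor $a(x)\mid(x^s-1)$ of degree $t_2$. The kernel $K:=\ker(\pi|_{\mathcal{C}})$ consists of pairs $(p(x)\,\vert\,0)$; by the same argument the first coordinates form an ideal of $\mathbb{F}_4[x]/\langle x^r-1\rangle$, yielding $K=\langle (b(x)\,\vert\,0)\rangle$ with monic $b(x)\mid (x^r-1)$ of degree $t_1$.

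Next, by surjectivity of $\pi|_{\mathcal{C}}$, I would choose any $(l'(x)\,\vert\,a(x))\in\mathcal{C}$, divide $l'(x)$ by $b(x)$ in $\mathbb{F}_4[x]$, and subtract the corresponding $\mathbb{F}_4[x]$-multiple of $(b(x)\,\vert\,0)$ to obtain a codeword $(l(x)\,\vert\,a(x))$ with $\deg l(x)<t_1$, establishing condition~(1). Together with $K$ these two codewords generate $\mathcal{C}$: any $(\alpha(x)\,\vert\,\beta(x))\in\mathcal{C}$ satisfies $\beta(x)=\lambda(x)a(x)$ for some $\lambda(x)$, and then $(\alpha(x)\,\vert\,\beta(x))-\lambda(x)\star(l(x)\,\vert\,a(x))$ lies in $K=\langle(b(x)\,\vert\,0)\rangle$. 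For condition~(2), I would apply $\tfrac{x^s-1}{a(x)}\in\mathbb{F}_4[x]$ to $(l(x)\,\vert\,a(x))$: the second coordinate vanishes, so $(\tfrac{x^s-1}{a(x)}l(x)\,\vert\,0)\in K$, which forces $b(x)\mid \tfrac{x^s-1}{a(x)}l(x)$ in $\mathbb{F}_4[x]/\langle x^r-1\rangle$, and since $b(x)\mid(x^r-1)$ this lifts to divisibility in $\mathbb{F}_4[x]$.

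For condition~(3), recall that the cyclic code $\langle b(x)\rangle\subseteq \mathbb{F}_4[x]/\langle x^r-1\rangle$ has $\mathbb{F}_4$-dimension $r-t_1$ with basis $\{x^i b(x):0\le i\le r-t_1-1\}$, so $|K|=4^{\,r-t_1}$; similarly $|\pi(\mathcal{C})|=4^{\,s-t_2}$. The short exact sequence $0\to K\to \mathcal{C}\xrightarrow{\pi}\pi(\mathcal{C})\to 0$ then gives $|\mathcal{C}|=4^{\,r+s-t_1-t_2}$. Any codeword reduces modulo $\mathrm{span}(S_2)$ into $K=\mathrm{span}(S_1)$, so $S_1\cup S_2$ spans $\mathcal{C}$, and the cardinality count forces $\mathbb{F}_4$-linear independence. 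I expect the most delicate step to be condition~(2): one must argue that the divisibility derived inside the quotient $\mathbb{F}_4[x]/\langle x^r-1\rangle$ can be promoted to genuine polynomial divisibility in $\mathbb{F}_4[x]$, which works precisely because $b(x)$ itself divides $x^r-1$. The rest is standard cyclic-code bookkeeping combined with the short exact sequence.
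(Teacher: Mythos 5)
Your proof is correct and follows the standard projection--kernel argument (project $\mathcal{C}$ onto the second block to get $\langle a(x)\rangle$, identify the kernel with $\langle (b(x)\,\vert\,0)\rangle$, reduce the lift modulo $b(x)$, and annihilate the second coordinate with $\tfrac{x^s-1}{a(x)}$ to get the divisibility condition). The paper itself quotes this theorem from the reference without proof, and your argument is essentially the one given there (and in Borges et al.\ for $\mathbb{Z}_2$-double cyclic codes), so there is nothing to flag beyond the paper's own $r=m$, $s=n$ notational slip, which you have inherited harmlessly.
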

		\begin{lemma}\label{lemma2}
		Let $\mathcal{C} = \Big\langle(b(x) \vert 0), (l(x) \vert a(x)\Big\rangle $ be a $\mathbb{F}_4$-double cyclic code. Assume the
		generator polynomials of $\mathcal{C}$ satisfy the conditions in Proposition 1. Define the sets
		\begin{align*}
			\mathcal{S}_1&=\{(b(x)\vert 0), x\star (b(x)\vert 0), \ldots x^{m-deg(b(x))-1}\star (b(x)\vert 0) \}\\
			\mathcal{S}_2&=\{(l(x)\vert a(x)), x\star (l(x)\vert a(x)), \ldots x^{n-deg(a(x))-1}\star (l(x)\vert a(x)) \}.
		\end{align*}
		Then, $\mathcal{S}_1\cup \mathcal{S}_2$ forms a minimal generating set for $\mathcal{C}$ as a $\mathbb{F}_4$-module.
	\end{lemma}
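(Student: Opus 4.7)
The plan is to prove the lemma by verifying that $\mathcal{S}_1\cup \mathcal{S}_2$ is a basis of $\mathcal{C}$ viewed as an $\mathbb{F}_4$-vector space, from which minimality is immediate. Theorem \ref{thm1} gives $|\mathcal{C}|=4^{m+n-t_1-t_2}$, so $\dim_{\mathbb{F}_4}\mathcal{C}=m+n-t_1-t_2$, while by construction $|\mathcal{S}_1|+|\mathcal{S}_2|=(m-t_1)+(n-t_2)$. Thus, once I prove that $\mathcal{S}_1\cup \mathcal{S}_2$ spans $\mathcal{C}$, a dimension count forces linear independence and minimality simultaneously; nonetheless I plan to establish independence directly, for transparency.

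For the spanning step, I would begin with an arbitrary codeword written as $p(x)\star(b(x)\,|\,0)+q(x)\star(l(x)\,|\,a(x))$ for some $p(x),q(x)\in\mathbb{F}_4[x]$, using the generators from Theorem \ref{thm1}. Euclidean division gives $q(x)=g(x)\cdot\frac{x^n-1}{a(x)}+r(x)$ with $\deg r(x)<n-t_2$, and since $a(x)\cdot\frac{x^n-1}{a(x)}=x^n-1\equiv 0$ in $\mathbb{F}_4[x]/\langle x^n-1\rangle$, this yields
\begin{equation*}
q(x)\star(l(x)\,|\,a(x)) = r(x)\star(l(x)\,|\,a(x)) + \Bigl(g(x)\,l(x)\,\tfrac{x^n-1}{a(x)}\,\Big|\,0\Bigr).
\end{equation*}
The crucial ingredient is now condition (2) of Theorem \ref{thm1}, $b(x)\mid l(x)\,\frac{x^n-1}{a(x)}$, which lets me write the second summand as a polynomial multiple of $(b(x)\,|\,0)$ and absorb it into $p(x)\star(b(x)\,|\,0)$. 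A further Euclidean reduction modulo $\frac{x^m-1}{b(x)}$, combined with $b(x)\mid x^m-1$, rewrites the resulting $(b(x)\,|\,0)$-contribution as an $\mathbb{F}_4$-combination of $\mathcal{S}_1$, while the remainder $r(x)\star(l(x)\,|\,a(x))$ is already an $\mathbb{F}_4$-combination of $\mathcal{S}_2$ by the degree bound on $r(x)$.

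For linear independence, I would assume a relation $\sum_{i=0}^{m-t_1-1}\alpha_i\, x^i\star(b(x)\,|\,0)+\sum_{j=0}^{n-t_2-1}\beta_j\, x^j\star(l(x)\,|\,a(x))=0$ in $\mathcal{A}_{m,n}$. Projecting onto the second coordinate yields $\sum_j\beta_j x^j a(x)\equiv 0\pmod{x^n-1}$; the polynomials $\{x^j a(x)\}_{j=0}^{n-t_2-1}$ have pairwise distinct degrees all strictly less than $n$, so they are $\mathbb{F}_4$-linearly independent in $\mathbb{F}_4[x]/\langle x^n-1\rangle$, forcing every $\beta_j=0$. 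The first coordinate then reduces to $\sum_i\alpha_i x^i b(x)\equiv 0\pmod{x^m-1}$, and the same degree argument yields $\alpha_i=0$ for every $i$.

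The main obstacle is the spanning step, specifically the reliance on condition (2) of Theorem \ref{thm1} to convert the remainder of the $q(x)$-reduction into a multiple of $(b(x)\,|\,0)$; without that divisibility the two generators would produce pieces that could not be absorbed into the $\mathcal{S}_1$ direction. Everything else---the Euclidean divisions and the degree-based independence argument---is routine, and the final minimality claim then follows from the equality $|\mathcal{S}_1\cup\mathcal{S}_2|=\dim_{\mathbb{F}_4}\mathcal{C}$.
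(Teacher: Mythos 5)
Your proof is correct and follows essentially the same route as the argument the paper defers to (Proposition~3 of Borges et al.\ for $\mathbb{Z}_2$-double cyclic codes): reduce $q(x)$ modulo $\frac{x^n-1}{a(x)}$, absorb the leftover first-coordinate term into the $(b(x)\,|\,0)$ direction via the divisibility condition $b(x)\mid l(x)\frac{x^n-1}{a(x)}$, then reduce modulo $\frac{x^m-1}{b(x)}$, with minimality following from the degree/cardinality count. No gaps.
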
	
	Proof is similar to proof of Proposition 3 in \cite{borges2018z}
	
	\subsection{Reversible $\mathbb{F}_4$-double cyclic code}
	\begin{definition}
		The reverse of a $\mathbb{F}_4$-double cyclic code $\mathcal{C}$, denoted by $\mathcal{C}^r$, is a
		linear code over $\mathbb{F}_4$ defined as
		\begin{equation*}
			\mathcal{C}^r=\{(\alpha_{m-1}, \alpha_{m-2},\ldots,  \alpha_{0}\vert \beta_{n-1}, \beta_{n-2},\ldots,  \beta_{0}): (\alpha_{0}, \alpha_{1},\ldots,  \alpha_{m-1}\vert \beta_{0}, \beta_{1},\ldots,  \beta_{n-1})\in \mathcal{C}\}.
		\end{equation*}
	\end{definition}
	\begin{theorem}\label{thm2}
		The reverse of  $\mathbb{F}_4$-double cyclic code is also $\mathbb{F}_4$-double cyclic code.
		\begin{proof}
			Let $\mathcal{C}$ be a  $\mathbb{F}_4$-double cyclic code and $\mathcal{C}^r$ be the reverse of  $\mathbb{F}_4$-double cyclic code. To show $\mathcal{C}^r$ is $\mathbb{F}_4$-double cyclic code it is enough to show $(\alpha_{0},\alpha_{m-1},\ldots, \alpha_{1}\vert \beta_{0},\beta_{n-1},\ldots, \beta_{1})\in \mathcal{C}^r$ for all 
			$(\alpha_{0}, \alpha_{1},\ldots,  \alpha_{m-1}\vert \beta_{0}, \beta_{1},\ldots,  \beta_{n-1})\in \mathcal{C}$. Let $(\alpha_{0}, \alpha_{1},\ldots, \alpha_{m-1}\vert \beta_{0}, \beta_{1},\ldots, \beta_{n-1})\in \mathcal{C}$. Then $(\alpha_{m-1}, \ldots, \alpha_{1}, \alpha_{0}\vert \beta_{n-1}, \ldots, \beta_{1}, \beta_{0})\in \mathcal{C}^r$. 
			
			Now 
			
			\begin{align*}
				&(\alpha_{m-1}+\cdots+ \alpha_{1}x^{m-2}+\alpha_{0}x^{m-1}\vert \beta_{n-1}+\cdots+\beta_{1}x^{n-2}+ \beta_{0}x^{n-1})\in \mathcal{C}^r\\
				\implies&(\alpha_{0}+\alpha_{1}x+\cdots+ \alpha_{m-1}x^{m-1}\vert \beta_{0}+\beta_{1}x+\cdots+ \beta_{n-1}x^{n-1})\in \mathcal{C}\\
				\implies&x^{lcm(m,n)-1}\star\Big(\alpha_{0}+\alpha_{1}x+\cdots+\alpha_{m-1}x^{m-1}\Big\vert \beta_{0}+\beta_{1}x+\cdots+ \beta_{n-1}x^{n-1}\Big)\in \mathcal{C}\\
				\implies&\Bigg(x^{ma_m-1}\Big(\alpha_{0}+\alpha_{1}x+\cdots+ \alpha_{m-1}x^{m-1}\Big)\Bigg\vert x^{na_n-1}\Big(\beta_{0}+\beta_{1}x+\cdots+\beta_{n-1}x^{n-1}\Big)\Bigg)\in \mathcal{C}\\
				&\qquad \qquad \qquad \Big[\text{where},~ lcm(m,n)=ma_m~ \text{and}~ lcm(m,n)=na_n~\text {for positive integers}~ a_m ~ {and}~a_n. \Big]\\
				\implies&\Bigg(x^{(a_m-1)m+m-1}\Big(\alpha_{0}+\alpha_{1}x+\cdots+ \alpha_{m-1}x^{m-1}\Big)\Bigg\vert x^{(a_n-1)n+n-1}\Big(\beta_{0}+\beta_{1}x+\cdots+ \beta_{n-1}x^{n-1}\Big)\Bigg)\in \mathcal{C}\\
				\implies&\Bigg(x^{m-1}\Big(\alpha_{0}+\alpha_{1}x+\cdots+ \alpha_{m-1}x^{m-1}\Big)\Bigg\vert x^{n-1}\Big(\beta_{0}+\beta_{1}x+\cdots \beta_{n-1}x^{n-1}\Big)\Bigg)\in \mathcal{C}\\
				\implies&\Big(\alpha_{1}+\alpha_{2}x+\cdots+ \alpha_{m-1}x^{m-2}+\alpha_{0}x^{m-1}\Big\vert \beta_{1}+\beta_{2}x+\cdots+ \beta_{n-1}x^{n-2}+\beta_{0}x^{n-1}\Big)\in \mathcal{C}\\
				\implies&\Big(\alpha_{0}+\alpha_{m-1}x+\cdots+ \alpha_{2}x^{m-2}+\alpha_{1}x^{m-1}\Big\vert \beta_{0}+\beta_{n-1}x+\cdots+ \beta_{2}x^{n-2}+\beta_{1}x^{n-1}\Big)\in \mathcal{C}^r.
			\end{align*}
			Hence $\mathcal{C}^r$ is $\mathbb{F}_4$-double cyclic code.
		\end{proof}
	\end{theorem}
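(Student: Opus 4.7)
The plan is to take an arbitrary element of $\mathcal{C}^r$, apply one double-cyclic shift to it, and exhibit the result as the reverse of some element that already lives in $\mathcal{C}$. Since reversing coordinates is an involution on $\mathbb{F}_4^m \times \mathbb{F}_4^n$, the statement will follow once I can pull the shift back through the reversal and absorb it into a (possibly long) iterated shift inside $\mathcal{C}$.

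First, I would fix $\boldsymbol{\delta} = (\alpha_0, \dots, \alpha_{m-1} \mid \beta_0, \dots, \beta_{n-1}) \in \mathcal{C}$ with reverse $\boldsymbol{\delta}^{\mathrm{rev}} = (\alpha_{m-1}, \dots, \alpha_0 \mid \beta_{n-1}, \dots, \beta_0) \in \mathcal{C}^r$. A single double-cyclic shift of $\boldsymbol{\delta}^{\mathrm{rev}}$ yields $(\alpha_0, \alpha_{m-1}, \dots, \alpha_1 \mid \beta_0, \beta_{n-1}, \dots, \beta_1)$, and reversing this back produces $\boldsymbol{\eta} := (\alpha_1, \alpha_2, \dots, \alpha_{m-1}, \alpha_0 \mid \beta_1, \beta_2, \dots, \beta_{n-1}, \beta_0)$. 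So it is enough to prove $\boldsymbol{\eta} \in \mathcal{C}$.

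The crux is to realize $\boldsymbol{\eta}$ as a single synchronized shift of $\boldsymbol{\delta}$: I need the first block rotated left by one position and the second block also rotated left by one position, which in the polynomial picture means multiplication by $x^{m-1}$ on the first block and by $x^{n-1}$ on the second. Since the $\mathbb{F}_4[x]$-module action $\star$ uses one common exponent on both blocks, I would pick $i = \mathrm{lcm}(m,n) - 1$. Writing $\mathrm{lcm}(m,n) = m a_m = n a_n$, this exponent reduces to $m-1 \pmod m$ on the first block and to $n-1 \pmod n$ on the second, so $x^i \star \boldsymbol{\delta}(x) = \boldsymbol{\eta}(x)$. Because $\mathcal{C}$ is closed under multiplication by $x$, it is closed under multiplication by $x^i$, whence $\boldsymbol{\eta} \in \mathcal{C}$, and reversing delivers the desired membership in $\mathcal{C}^r$. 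The main obstacle is precisely this simultaneous congruence bookkeeping in two unrelated moduli $m$ and $n$; once the $\mathrm{lcm}$ trick is in place to synchronize both rotations into a $(-1,-1)$ shift, the rest is formal.
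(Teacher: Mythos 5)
Your proposal is correct and follows essentially the same route as the paper's proof: reduce the shift-closure of $\mathcal{C}^r$ to showing that the simultaneous left-rotation $(\alpha_1,\ldots,\alpha_{m-1},\alpha_0\mid\beta_1,\ldots,\beta_{n-1},\beta_0)$ lies in $\mathcal{C}$, and realize it as $x^{\mathrm{lcm}(m,n)-1}\star\boldsymbol{\delta}(x)$, which reduces to $x^{m-1}$ and $x^{n-1}$ on the respective blocks. This is exactly the paper's lcm synchronization argument, so there is nothing further to add.
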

	
	We observe that the $\mathbb{Z}_2$-double cyclic code and the $\mathbb{F}_4$-double cyclic code have comparable structures. The necessary and sufficient condition for a $\mathbb{Z}_2$-double cyclic code to be reversible is studied by \cite{patanker2023reversible}. Since the results in this context are a simple extension of the $\mathbb{Z}_2$-double cyclic code analysis, we do not include the proofs.
	\begin{theorem}\label{thm3}
		Let $\mathcal{C} = \Big\langle(b(x) \vert 0), (l(x) \vert a(x))\Big\rangle$ be an $\mathbb{F}_4$-double cyclic code such that its generators satisfy conditions (1) and (2) of Proposition 2.2. If
		$m- deg(l(x)) \geq n- deg(a(x))$, then $\mathcal{C} $ is reversible if and only if the following
		conditions are satisfied
		\begin{enumerate}
			\item$ b(x) = b^\ast (x)$,
			\item $ a(x) = a^\ast(x)$,
			\item$ b(x)\Big\vert \Big(x^{m-n+deg(a(x))-deg(l(x))}l^\ast(x) -l(x)\Big)$.
		\end{enumerate}
	\end{theorem}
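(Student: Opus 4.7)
The plan is to prove both implications by identifying the coordinate-wise reverse of a length-$k$ vector represented by $v(x) = v_0 + v_1 x + \cdots + v_{k-1} x^{k-1}$ with the polynomial $x^{k-1-\deg v(x)} v^*(x)$. Consequently, the reverses of the two generators of $\mathcal{C}$ are
\[
(b(x) \mid 0)^r = (x^{m-1-\deg b(x)} b^*(x) \mid 0),
\]
\[
(l(x) \mid a(x))^r = (x^{m-1-\deg l(x)} l^*(x) \mid x^{n-1-\deg a(x)} a^*(x)).
\]
Because the reverse operator $R$ and the shift operator $T$ satisfy $R \circ T = T^{-1} \circ R$ on a cyclic group, $\mathcal{C}^r$ is the $\mathbb{F}_4[x]$-submodule of $\mathcal{A}_{m,n}$ generated by these two reversed elements, so $\mathcal{C}$ is reversible if and only if both reversed generators lie in $\mathcal{C}$.

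For the forward direction, I first note that any element of $\mathcal{C}$ with zero second coordinate has first coordinate a multiple of $b(x)$ in $\mathbb{F}_4[x]/\langle x^m-1\rangle$; this follows from the structural description of $\mathcal{C}$ together with condition (2) of Theorem \ref{thm1}. Applying this to $(b(x) \mid 0)^r$, and using that $x^{m-1-\deg b(x)}$ is a unit in $\mathbb{F}_4[x]/\langle x^m-1\rangle$, yields $b(x) \mid b^*(x)$ in $\mathbb{F}_4[x]$; this refines to $b(x) = b^*(x)$ by a leading-coefficient comparison combined with the involution identity $(b^*)^* = b$. The analogous argument on the second coordinate of $(l(x) \mid a(x))^r$ yields $a(x) = a^*(x)$. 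To derive condition (3), I write
\[
(l(x) \mid a(x))^r = \lambda_1(x)(b(x) \mid 0) + \lambda_2(x)(l(x) \mid a(x))
\]
in $\mathcal{A}_{m,n}$. Using $a = a^*$, the second-coordinate equation forces $\lambda_2(x) \equiv x^{n-1-\deg a(x)} \pmod{(x^n-1)/a(x)}$. Substituting into the first-coordinate equation and absorbing the resulting term in $l(x)(x^n-1)/a(x)$ into a multiple of $b(x)$ via condition (2) of Theorem \ref{thm1} produces
\[
b(x) \mid \bigl(x^{m-1-\deg l(x)} l^*(x) - x^{n-1-\deg a(x)} l(x)\bigr) \pmod{x^m-1}.
\]
Factoring out the unit $x^{n-1-\deg a(x)}$ and invoking the hypothesis $m - \deg l(x) \geq n - \deg a(x)$ to ensure the exponent is nonnegative gives condition (3), with ordinary divisibility in $\mathbb{F}_4[x]$ since the right-hand side has degree less than $m$.

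For the backward direction, I must show the two reversed generators lie in $\mathcal{C}$. By condition (1), $(b(x) \mid 0)^r = (x^{m-1-\deg b(x)} b(x) \mid 0)$ is a cyclic shift of $(b(x) \mid 0)$, hence in $\mathcal{C}$. For the second reversed generator, I compare $(l(x) \mid a(x))^r$ with the cyclic shift $x^{n-1-\deg a(x)} \star (l(x) \mid a(x)) \in \mathcal{C}$. Using $a = a^*$, their difference lies entirely in the first coordinate and equals $x^{n-1-\deg a(x)} \bigl(x^{m-n+\deg a(x)-\deg l(x)} l^*(x) - l(x)\bigr)$, which is a multiple of $b(x)$ by condition (3). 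Thus the difference is a multiple of $(b(x) \mid 0)$, so $(l(x) \mid a(x))^r \in \mathcal{C}$. This yields $\mathcal{C}^r \subseteq \mathcal{C}$, and equality follows since reversal preserves cardinality.

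The main obstacle is the degree bookkeeping around condition (3). The hypothesis $m - \deg l(x) \geq n - \deg a(x)$ is precisely what is required so that $x^{m-n+\deg a(x)-\deg l(x)}$ is a genuine polynomial and that condition (3) can be phrased as ordinary divisibility in $\mathbb{F}_4[x]$ rather than divisibility modulo $x^m - 1$; without it, one would have to track degrees inside the quotient ring throughout, obscuring the link between $l^*(x)$ and $b(x)$. A secondary subtle point is passing from $b(x) \mid b^*(x)$ to $b(x) = b^*(x)$: the divisibility together with equality of degrees (which uses $b(0) \neq 0$, valid because $b(x) \mid x^m - 1$) gives $b^*(x) = c \cdot b(x)$ for some $c \in \mathbb{F}_4^*$, and the involution $(b^*)^* = b$ then forces $c^2 = 1$, hence $c = 1$.
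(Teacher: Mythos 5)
Your proof is correct: the reduction of reversibility to membership of the two reversed generators (via $R\circ T = T^{-1}\circ R$ and a cardinality argument), the identification of a vector's reverse with $x^{k-1-\deg v}v^*(x)$, and the degree bookkeeping that turns $b(x)\mid\bigl(x^{m-1-\deg l(x)}l^*(x)-x^{n-1-\deg a(x)}l(x)\bigr)$ into condition (3) are all sound, including the passage from $b\mid b^*$ to $b=b^*$ using $b(0)\neq 0$ and the involution. The paper itself omits the proof, deferring to the $\mathbb{Z}_2$-double cyclic case of Patanker, and your argument is precisely the standard adaptation of that approach to $\mathbb{F}_4$, so there is nothing substantive to compare.
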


	\section{$\mathcal{R}$-double cyclic code}\label{Sec4}
		\begin{definition}
		Let $\mathfrak{C}$ be a binary linear code of length $\ell = m + n$. The code $\mathfrak{C}$ is called
		$\mathcal{R}$-double cyclic code if
		\begin{align*}
			&(\alpha_{0}, \alpha_{1},\ldots,  \alpha_{m-1}\vert \beta_{0}, \beta_{1},\ldots,  \beta_{n-1})\in \mathfrak{C}\\
			\implies&
			(\alpha_{m-1}, \alpha_{0},\ldots,  \alpha_{m-2}\vert \beta_{n-1}, \beta_{0},\ldots,  \beta_{n-2})\in \mathfrak{C}.
		\end{align*}

	\end{definition}

	Let $\mathfrak{C} \subseteq \mathcal{R}^m\times \mathcal{R}^n_4$ be a $\mathcal{R}$-double cyclic code. Let $\mathfrak{C}_m$ be the canonical projection of $\mathfrak{C}$
	on the first $m$ coordinates and $\mathfrak{C}_n$ on the last $n$ coordinates. Note that $\mathfrak{C}_m$ and $\mathfrak{C}_n$ are binary
	cyclic codes of length $m$ and $n$, respectively. The code $\mathfrak{C}$ is called separable if it is the direct
	product of $\mathfrak{C}_m$ and $\mathfrak{C}_n$.
	
	There is a bijective map between $\mathcal{R}^m\times \mathcal{R}^n$ and $\frac{\mathcal{R}[x]}{\langle x^m-1 \rangle }\times \frac{\mathcal{R}[x]}{\langle x^n-1\rangle }$ given
	by:
	\begin{equation*}
		(\alpha_{0}, \alpha_{1},\ldots,  \alpha_{m-1}\vert \beta_{0}, \beta_{1},\ldots,  \beta_{n-1})\mapsto (\alpha_{0}+ \alpha_{1}x+\cdots+ \alpha_{m-1}x^{m-1}\vert \beta_{0}+ \beta_{1}x+\cdots+  \beta_{n-1}x^{n-1}).
	\end{equation*}
	We denote the image of the vector $\boldsymbol{\delta}$ by $\boldsymbol{\delta}(x)$.

	The structure of $\mathcal{R}$-double cyclic code of length $(m, n)$ is given in \cite{bathala2021some}.
	
	\begin{theorem}\cite{bathala2021some}\label{thm4}
		Let $\mathfrak{C}$ be an R-double cyclic code of length $(m, n)$. Then
		\begin{equation*}
			\mathfrak{C} = \Big\langle(1 + v)b_1(x) + vb_2(x) \vert 0), ((1 + v)l_1(x) + vl_2(x)\Big\vert (1 + v)a_1(x) + va_2(x)\Big\rangle,
		\end{equation*}
		where $a_1(x)$, $a_2(x)$, $b_1(x)$ and $b_2(x)$ are monic polynomials such that $a_1(x)\vert(x^n-1)$,
		$a_2(x)\vert (x^n - 1)$, $b_1(x)\vert(x^m- 1)$ and $b_2(x)\vert(x^m- 1)$.
	\end{theorem}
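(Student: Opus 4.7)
The plan is to reduce the structure problem over $\mathcal{R}$ to two structure problems over $\mathbb{F}_4$, using the orthogonal-idempotent decomposition of $\mathcal{R}$ and then invoking Theorem \ref{thm1}. Set $e_1 = 1+v$ and $e_2 = v$; since $v^2 = v$ and $\mathrm{char}\,\mathcal{R} = 2$, these are orthogonal idempotents with $e_1 + e_2 = 1$ and $e_1 e_2 = 0$, which gives the ring decomposition $\mathcal{R} \cong (1+v)\mathbb{F}_4 \oplus v\mathbb{F}_4$. Correspondingly, every element of $\mathcal{R}^m \times \mathcal{R}^n$ has a unique expression $\mathbf{c} = (1+v)\mathbf{c}_1 + v\mathbf{c}_2$ with $\mathbf{c}_1,\mathbf{c}_2 \in \mathbb{F}_4^m \times \mathbb{F}_4^n$.

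Next I would define
\begin{align*}
\mathfrak{C}_1 &= \{\mathbf{c}_1 \in \mathbb{F}_4^m \times \mathbb{F}_4^n : \exists\,\mathbf{c}_2 \text{ with } (1+v)\mathbf{c}_1 + v\mathbf{c}_2 \in \mathfrak{C}\}, \\
\mathfrak{C}_2 &= \{\mathbf{c}_2 \in \mathbb{F}_4^m \times \mathbb{F}_4^n : \exists\,\mathbf{c}_1 \text{ with } (1+v)\mathbf{c}_1 + v\mathbf{c}_2 \in \mathfrak{C}\}.
\end{align*}
Multiplying any codeword in $\mathfrak{C}$ by $e_1$ and by $e_2$ (and using the closure of $\mathfrak{C}$ under double-cyclic shifts) shows that each $\mathfrak{C}_i$ is itself an $\mathbb{F}_4$-double cyclic code of length $(m,n)$, and that $\mathfrak{C} = (1+v)\mathfrak{C}_1 \oplus v\mathfrak{C}_2$ as an internal direct sum of $\mathcal{R}$-submodules.

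Applying Theorem \ref{thm1} to each component I would obtain
\begin{equation*}
\mathfrak{C}_1 = \langle (b_1(x) \mid 0),\, (l_1(x) \mid a_1(x)) \rangle, \qquad \mathfrak{C}_2 = \langle (b_2(x) \mid 0),\, (l_2(x) \mid a_2(x)) \rangle,
\end{equation*}
with monic polynomials satisfying $b_i(x) \mid (x^m - 1)$ and $a_i(x) \mid (x^n - 1)$. Writing a generic codeword of $\mathfrak{C}$ as $(1+v)\mathbf{c}_1 + v\mathbf{c}_2$ and expanding each $\mathbf{c}_i$ as an $\mathbb{F}_4[x]$-combination of the generators of $\mathfrak{C}_i$ then produces
\begin{equation*}
((1+v)b_1(x) + v b_2(x) \mid 0) \quad\text{and}\quad ((1+v)l_1(x) + v l_2(x) \mid (1+v)a_1(x) + v a_2(x))
\end{equation*}
as the desired generating pair over $\mathcal{R}[x]$, and the divisibility conditions transfer directly from Theorem \ref{thm1}.

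The main obstacle will be justifying that this pair really generates $\mathfrak{C}$ as an $\mathcal{R}$-module, and not merely the set $(1+v)\mathfrak{C}_1 + v\mathfrak{C}_2$. The key point is that, because of the orthogonality relations $e_i^2 = e_i$ and $e_1 e_2 = 0$, multiplying the two proposed generators by $e_1$ retrieves the $\mathbb{F}_4[x]$-generators of $\mathfrak{C}_1$ (scaled by $1+v$), while multiplying by $e_2$ retrieves those of $\mathfrak{C}_2$ (scaled by $v$); hence every $\mathbb{F}_4[x]$-combination inside each $\mathfrak{C}_i$ can be realized as an $\mathcal{R}[x]$-combination of the two combined generators. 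Once this bookkeeping is verified, uniqueness of the monic divisor structure on each side yields the full statement.
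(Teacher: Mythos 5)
Your proposal is correct and follows essentially the same route as the source: the paper itself records exactly this idempotent decomposition in Theorems \ref{thm5} and \ref{thm6}, namely that $\mathfrak{C}=(1+v)\mathcal{C}_1\oplus v\mathcal{C}_2$ with $\mathcal{C}_1,\mathcal{C}_2$ being $\mathbb{F}_4$-double cyclic codes whose generators from Theorem \ref{thm1} are then recombined via $e_1=1+v$, $e_2=v$. Your closing remark about recovering each component's generators by multiplying the combined pair by $e_1$ and $e_2$ is precisely the justification needed, so no gap remains.
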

	
	In \cite{bathala2021some}, it is shown that  $\mathcal{R}$-double cyclic code of length $(m, n)$ can be written as a direct sum of  $\mathbb{F}_4$-double cyclic codes of length $(m, n)$. The following theorems give the structure of $\mathcal{R}$-double cyclic code of length $(m, n)$ using the structure of $\mathbb{F}_4$-double cyclic codes of length $(m, n)$. Note that the structure of $\mathbb{F}_4$-double cyclic codes of length $(m, n)$ is given in Theorem \ref{thm1}.

	\begin{theorem}\cite{bathala2021some}\label{thm5}
		Let $\mathfrak{C}=(1+v)\mathcal{C}_1\oplus v\mathcal{C}_2$ be a linear code of length $m+n$ over $\mathcal{R}$. Then $\mathcal{C}$ is an $\mathcal{R}$-double cyclic code of length $(m,n)$ if and only if $\mathcal{C}_1$ and $\mathcal{C}_2$ are $\mathbb{F}_4$-double cyclic codes of length $(m,n)$.
	\end{theorem}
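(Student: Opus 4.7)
The plan is to exploit the orthogonal idempotent decomposition of $\mathcal{R}$. Since $v^{2}=v$ and the characteristic is two, $e_{1}=1+v$ and $e_{2}=v$ satisfy $e_{1}+e_{2}=1$ and $e_{1}e_{2}=v+v^{2}=0$, so $\mathcal{R}\cong \mathbb{F}_{4}\oplus\mathbb{F}_{4}$ as rings. Thus every $r\in\mathcal{R}$ has a unique expression $r=(1+v)r_{1}+v r_{2}$ with $r_{1},r_{2}\in\mathbb{F}_{4}$, and every codeword of $\mathfrak{C}$ decomposes coordinatewise into an $(1+v)$-part lying in $\mathcal{C}_{1}\subseteq\mathbb{F}_{4}^{m}\times\mathbb{F}_{4}^{n}$ and a $v$-part lying in $\mathcal{C}_{2}\subseteq\mathbb{F}_{4}^{m}\times\mathbb{F}_{4}^{n}$. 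This coordinatewise splitting is the engine of the proof.

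For the forward direction, I would assume $\mathfrak{C}$ is an $\mathcal{R}$-double cyclic code and take an arbitrary $(c_{1}\vert d_{1})\in\mathcal{C}_{1}$. By definition of $\mathcal{C}_{1}$, there exists $(c_{2}\vert d_{2})\in\mathcal{C}_{2}$ with $w=(1+v)(c_{1}\vert d_{1})+v(c_{2}\vert d_{2})\in\mathfrak{C}$. Applying the $\mathcal{R}$-double cyclic shift, which acts only by permuting positions, yields a codeword $w'\in\mathfrak{C}$ whose $(1+v)$-part is exactly the double cyclic shift of $(c_{1}\vert d_{1})$ and whose $v$-part is the double cyclic shift of $(c_{2}\vert d_{2})$. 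By uniqueness of the idempotent decomposition, the shift of $(c_{1}\vert d_{1})$ lies in $\mathcal{C}_{1}$, so $\mathcal{C}_{1}$ is $\mathbb{F}_{4}$-double cyclic; the identical argument with the roles of $(1+v)$ and $v$ swapped handles $\mathcal{C}_{2}$.

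For the converse, I would assume both $\mathcal{C}_{1}$ and $\mathcal{C}_{2}$ are $\mathbb{F}_{4}$-double cyclic codes of length $(m,n)$. A typical codeword of $\mathfrak{C}$ has the form $w=(1+v)(c_{1}\vert d_{1})+v(c_{2}\vert d_{2})$ with $(c_{i}\vert d_{i})\in\mathcal{C}_{i}$. Because the scalars $(1+v)$ and $v$ are fixed and the double cyclic shift is $\mathcal{R}$-linear and acts coordinatewise, the shift of $w$ equals $(1+v)(c_{1}'\vert d_{1}')+v(c_{2}'\vert d_{2}')$, where each $(c_{i}'\vert d_{i}')$ is the double cyclic shift of $(c_{i}\vert d_{i})$ and therefore still lies in $\mathcal{C}_{i}$ by hypothesis. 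Hence the shifted word remains in $\mathfrak{C}$, establishing that $\mathfrak{C}$ is $\mathcal{R}$-double cyclic.

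The only real obstacle is bookkeeping: one must verify cleanly that the coordinatewise idempotent decomposition is preserved by the double cyclic shift, which in turn reduces to the fact that the shift permutes coordinates without mixing the $(1+v)$-components and the $v$-components. No structural theorem beyond the ring decomposition $\mathcal{R}\cong\mathbb{F}_{4}\oplus\mathbb{F}_{4}$ and the linearity of the shift is needed, so the argument is short once the decomposition framework is set up.
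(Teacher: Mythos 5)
Your proof is correct: the orthogonal idempotent decomposition $\mathcal{R}=(1+v)\mathbb{F}_4\oplus v\mathbb{F}_4$, the fact that the double cyclic shift is a coordinate permutation (hence commutes with multiplication by the fixed idempotents), and the uniqueness of the $(1+v)$- and $v$-parts together give exactly the standard argument for both directions. The paper itself states this theorem as a citation to Bathala--Seneviratne without reproducing a proof, and your argument is the same canonical one used there, so there is nothing to flag.
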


	\begin{theorem}\cite{bathala2021some}\label{thm6}
		Let $\mathcal{C}_1 = \Big\langle(b_1(x) \vert 0), (l_1(x) \vert a_1(x))\Big\rangle$ and $\mathcal{C}_2 = \Big\langle(b_2(x) \vert 0), (l_2(x) \vert a_2(x))\Big\rangle$
		be two $\mathbb{F}_4$-double cyclic codes of length $(m, n)$,  where $l_1(x), l_2(x) \in   \mathbb{F}_4[x]$, $ b_1(x)\vert (x^m- 1)$, $ b_2(x)\vert (x^m- 1)$, $a_1(x)\vert(x^n-1)$ and $a_2(x)\vert(x^n-1)$. If $\mathfrak{C}=(1+v)\mathcal{C}_1\oplus v\mathcal{C}_2$ is an $\mathcal{R}$-double cyclic code of length $(m, n)$, then 
		\begin{equation*}
			\mathfrak{C} = \Big \langle(1 + v)b_1(x) + vb_2(x) \vert 0), ((1 +
			v)l_1(x) + vl_2(x)\Big\vert (1 + v)a_1(x) + va_2(x)\Big \rangle.
		\end{equation*}
	\end{theorem}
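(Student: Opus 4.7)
The plan is to exploit the orthogonal idempotent decomposition of $\mathcal{R}$ coming from $v^2=v$. Since $\operatorname{char}\mathcal{R}=2$, the elements $v$ and $1+v$ satisfy $v(1+v)=0$, $v^2=v$, $(1+v)^2=1+v$, and $v+(1+v)=1$. Write $\mathfrak{D}$ for the right-hand side of the claim, that is, the $\mathcal{R}[x]$-submodule generated by the two displayed elements. I will prove $\mathfrak{D}\subseteq\mathfrak{C}$ and $\mathfrak{C}\subseteq\mathfrak{D}$ separately, using the assumed decomposition $\mathfrak{C}=(1+v)\mathcal{C}_1\oplus v\mathcal{C}_2$ (Theorem \ref{thm5}) together with the $\mathbb{F}_4$-generators supplied by Theorem \ref{thm1}.

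For the inclusion $\mathfrak{D}\subseteq\mathfrak{C}$, both generators of $\mathfrak{D}$ already lie in $\mathfrak{C}$ by inspection: $((1+v)b_1+vb_2\mid 0)=(1+v)(b_1\mid 0)+v(b_2\mid 0)\in(1+v)\mathcal{C}_1\oplus v\mathcal{C}_2=\mathfrak{C}$, and the analogous splitting handles the second generator. Since $\mathfrak{C}$ is an $\mathcal{R}[x]$-submodule, every $\mathcal{R}[x]$-combination of these two elements remains inside $\mathfrak{C}$.

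For the reverse inclusion I would take an arbitrary $c=(1+v)c_1+vc_2\in\mathfrak{C}$ with $c_i\in\mathcal{C}_i$, expand each $c_i=p_i(x)(b_i\mid 0)+q_i(x)(l_i\mid a_i)$ with $p_i,q_i\in\mathbb{F}_4[x]$ via Theorem \ref{thm1}, and then lift the scalars into $\mathcal{R}[x]$ by setting $P(x)=(1+v)p_1(x)+vp_2(x)$ and $Q(x)=(1+v)q_1(x)+vq_2(x)$. The orthogonality $v(1+v)=0$, combined with $v^2=v$ and $(1+v)^2=1+v$, annihilates every cross term, so $P\star((1+v)b_1+vb_2\mid 0)+Q\star((1+v)l_1+vl_2\mid(1+v)a_1+va_2)$ collapses coordinate by coordinate to $(1+v)c_1+vc_2=c$, exhibiting $c$ as an element of $\mathfrak{D}$.

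The only point demanding care is the short bookkeeping of those four cross products; I do not foresee a genuine obstacle. The result is essentially a Chinese-Remainder-style lifting of $\mathbb{F}_4[x]$-generators through the idempotent decomposition of $\mathcal{R}[x]$, and the divisibility conditions on the $a_i$ and $b_i$ are inherited automatically from the individual $\mathbb{F}_4$-double cyclic structure of $\mathcal{C}_1$ and $\mathcal{C}_2$ without imposing any new constraint on the $l_i$.
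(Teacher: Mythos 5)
Your argument is correct: the orthogonality relations $v(1+v)=0$, $v^2=v$, $(1+v)^2=1+v$ do kill all four cross terms, and the lifting $P=(1+v)p_1+vp_2$, $Q=(1+v)q_1+vq_2$ recovers $c=(1+v)c_1+vc_2$ exactly as you describe, while the easy inclusion $\mathfrak{D}\subseteq\mathfrak{C}$ follows from splitting each generator along the idempotents. The paper itself states this theorem as a citation to \cite{bathala2021some} and supplies no proof, but your idempotent-decomposition argument is the standard one for results of this type and matches the approach used for the analogous structure theorems over $\mathbb{F}_4+v\mathbb{F}_4$.
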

	
	\section{DNA codes over $\mathcal{R}$}\label{Sec5}
	%Next, we discuss the DNA codes over $\mathcal{R}$.
	The set	$S_{D_{4}}= \{A, T, G, C\}$ called the DNA alphabet. DNA code is a set of codewords $(x_0, x_1,\ldots , x_{n-1})$, where
	$x_i \in \{A, T, G, C\}$. The elements $\{0, 1,\gamma, \gamma + 1\}$ of $\mathbb{F}_4$ are related with the DNA codons
	$S_{D_{4}}$ such that $A \rightarrow0$, $T \rightarrow
	1$, $C \rightarrow \gamma$,$ G \rightarrow \gamma + 1 = \gamma^2$.
	Next, we describe the set $S_{D_{16}}$, which comes from \cite{oztas2013lifted}
	\begin{equation*}
		S_{D_{16}}= \{AA, AT, AG, AC, TT, T A, TG, TC, GG, GA, GC, GT, CC, CA, CG, CT\}.
	\end{equation*}
	The set $S_{D_{16}}$ denotes the set of DNA double pairs. Now define a map $\psi$ from $\mathcal{R}$ to $S_{D_{16}}$, i.e.,
	\begin{equation*}
		\psi : \mathcal{R} \longrightarrow S_{D_{16}}, 
	\end{equation*}
	which gives one to one correspondence between elements of $\mathcal{R}$ and DNA double pairs of $S_{D_{16}}$, which are given in the Table \ref{DNA Double pair Table}. 
	
	\begin{table}[H]
		\centering
		\caption{DNA Double Pairs with elements of $\mathcal{R}$}
		\label{DNA Double pair Table}
		\begin{tabular}{|c|c|c|}
			\hline
			Elements of $\mathcal{R}$ &Gray Images &DNA Double Pairs  \\
			$a$&$\phi(a)$&$\psi(a)$\\
			\hline
			$0 $                    &   $(0,0)$                     &   AA\\
			$1  $                   &   $(1,1)$                     &   TT\\
			$\gamma$                &   $(\gamma, \gamma)$          &   CC\\
			$1 +\gamma $            &   $(1 + \gamma, 1 + \gamma)$  &   GG\\
			$v$                     &   $(1, 0)$                    &   TA\\
			$1 +v $                 &   $(0, 1)$                    &   AT\\
			$v +\gamma$             &   $(1 + \gamma, \gamma)$      &   GC\\
			$1 + v +\gamma $        &   $(\gamma, 1 + \gamma)$      &   CG\\
			$v\gamma$               &   $(\gamma, 0)$               &   CA\\
			$1 + v\gamma $          &   $(1 + \gamma, 1)$           &   GT\\
			$\gamma + v\gamma$      &   $(0,\gamma) $               &   AC\\
			$1 +\gamma+v\gamma$     &   $(1, 1 + \gamma)$           &   TG\\
			$v + v\gamma $          &   $(1 + \gamma, 0)$           &   GA\\
			$1 + v + v\gamma$       &   $(\gamma, 1)$               &   CT\\
			$\gamma+v+v\gamma$      &   $(1,\gamma)$                &   TC\\
			$1+\gamma+v+v\gamma$    &   $(0, 1 + \gamma)$           &   AG\\
			\hline
		\end{tabular}
	\end{table}
	\begin{definition}
		Let $\mathfrak{C}$ be a double cyclic
		code of length $(m,n)$ over $\mathcal{R}$ and $\big(\alpha_{0},\ldots, \alpha_{m-1}\vert \beta_{0},\ldots, \beta_{n-1}\big) \in \mathfrak{C}$. Then we define the following map using Table \ref{DNA Double pair Table}. 
		\begin{align*}
			\theta : \mathfrak{C} &\longrightarrow S^{2m}_{D_{4}}\times S^{2n}_{D_{4}}\\ \\
			\big(\alpha_{0},\ldots, \alpha_{m-1}\vert \beta_{0},\ldots, \beta_{n-1}\big)&\longmapsto \big(\psi(\alpha_{0}),\ldots, \psi(\alpha_{m-1})\vert \psi(\beta_{0}),\ldots,\psi( \beta_{n-1})\big).
		\end{align*}
	\end{definition}
	%Thus, whenever $\mathfrak{C}$ is a linear
	%code over $\mathcal{R}$ then $\theta(\mathfrak{C})$ is the corresponding code over $S^{2m}_{D_{4}}\times S^{2n}_{D_{4}}$.
	i.e., for any codeword  $x$ in $\mathfrak{C}$, $\theta(x)$ is the corresponding DNA sequence in $\theta(\mathfrak{C})\subseteq S^{2m}_{D_{4}}\times S^{2n}_{D_{4}}$.
    %DNA has two strands that are governed by the rule called Watson Crick Complement (WCC) , that is A pairs with T, G pairs with C.
    
	The codons satisfy the Watson-Crick complement which is given by $\overline{A} = T$, $ \overline{T} = A$, $\overline{C} = G$, $\overline{G} = C$. For any $a\in \mathcal{R}$, the complement of $a$, denoted by $\overline{a}$, is defined as $\overline{a}=b$ if $\psi(b)=\overline{\psi(a)}$. The complement of elements of  $\mathcal{R}$ is given in Table \ref{Complement Table}.

	\begin{table}[H]
		\centering
		\caption{Complement of elements of $\mathcal{R}$}
		\label{Complement Table}
		\begin{tabular}{|c|c|}
			\hline
			Elements of $\mathcal{R}$ &Complement of elements of $\mathcal{R}$ \\
			$a$&$\overline{a}$\\
			\hline
			$0 $                    &   $1  $     \\
			$1  $                   &   $0 $   \\
			$\gamma$                &   $1 +\gamma $  \\
			$1 +\gamma $            &   $\gamma$\\
			$v$                     &   $1 +v $   \\
			$1 +v $                 &   $v$ \\
			$v +\gamma$             &   $1 + v +\gamma $ \\
			$1 + v +\gamma $        &   $v +\gamma$   \\
			$v\gamma$               &   $1 + v\gamma $  \\
			$1 + v\gamma $          &   $v\gamma$ \\
			$\gamma + v\gamma$      &   $1 + \gamma + v\gamma $\\
			$1 + \gamma + v\gamma $ &   $\gamma + v\gamma$\\
			$v + v\gamma $          &   $1 + v + v\gamma$\\
			$1 + v + v\gamma$       &   $v + v\gamma $ \\
			$\gamma+v+v\gamma$      &   $1+\gamma+v+v\gamma$ \\
			$1+\gamma+v+v\gamma$    &   $\gamma+v+v\gamma$ \\
			\hline
		\end{tabular}
	\end{table}
	
	Let $x =(x_0, x_1, \ldots, x_{n-1}) \in \mathcal{R}^n$ be a codeword. Then
	\begin{enumerate}
		\item the reverse of $x$ is $x^r=(x_{n-1}, x_{n-2}, \ldots, x_1, x_0)$,
		\item  the complement of $x$ is $\overline{x}=(\overline{x_0},\overline{x_1}, \ldots, \overline{x_{n-1}})$,
		\item the reverse-complement, (called the Watson–Crick complement (WCC)) is $\overline{x^{r}}=(\overline{x_{n-1}},\overline{x_{n-2}}, \ldots, \overline{x_1},\overline{x_0})$.
	\end{enumerate}  
	
	\begin{definition}
		Let $\mathfrak{C}$ be a linear code of length $n$ over $\mathcal{R}$ and $x \in \mathfrak{C}$. Then $\mathfrak{C}$ is said to be
		\begin{enumerate}
			\item reversible if $x^r\in \mathfrak{C}$ ,
			\item complement if $\overline{x}\in \mathfrak{C}$,
			\item reversible-complement if $\overline{{x^r}}\in \mathfrak{C}$.
		\end{enumerate}
		
	\end{definition}

	\begin{definition}
		Let $\mathfrak{C}$ be a double code of length $(m, n)$ over $\mathcal{R}$ and $x \in \mathfrak{C}$. Then $\mathfrak{C}$ or equivalently $\theta(\mathfrak{C})$is said to be
		\begin{enumerate}
			\item reversible double cyclic DNA code if ${\theta (x)}^r\in \theta(\mathfrak{C})$ ,
			\item double cyclic DNA code if $\overline{{\theta (x)}^r}\in \theta(\mathfrak{C})$.
		\end{enumerate}
	\end{definition}

	\subsection{Reversible $\mathcal{R}$-double cyclic code}
	Now we discuss the reversible $\mathcal{R}$-double cyclic code of length $(m,n)$.
	
	\begin{theorem}\label{thm7}
		Let $\mathfrak{C}=(1+v)\mathcal{C}_1\oplus v\mathcal{C}_2$ be a double cyclic code of length $(m,n)$ over $\mathcal{R}$. Then $\mathfrak{C}$ is reversible if and only if $\mathcal{C}_1$ and $\mathcal{C}_2$ are reversible codes over $\mathbb{F}_4$.
	\end{theorem}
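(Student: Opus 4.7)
The plan is to reduce reversibility of $\mathfrak{C}$ to reversibility of each $\mathbb{F}_4$-component by exploiting two facts. First, reversal of a vector in $\mathcal{R}^{m+n}$ is a coordinate permutation (reverse the first $m$ coordinates and the last $n$ coordinates separately), hence $\mathcal{R}$-linear; in particular $(ry)^r = r\,y^r$ for every $r \in \mathcal{R}$ and every vector $y$. Second, the elements $1+v$ and $v$ are orthogonal idempotents of $\mathcal{R}$ summing to $1$ (since $v^2=v$ and $\mathcal{R}$ has characteristic two), so the direct sum $\mathfrak{C} = (1+v)\mathcal{C}_1 \oplus v\mathcal{C}_2$ is genuinely unique: every $x \in \mathfrak{C}$ has a unique expression $x = (1+v)c_1 + vc_2$ with $c_1 \in \mathcal{C}_1$ and $c_2 \in \mathcal{C}_2$.

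Combining these, the argument proceeds as follows. For sufficiency, assume $\mathcal{C}_1$ and $\mathcal{C}_2$ are reversible over $\mathbb{F}_4$. Given $x = (1+v)c_1 + vc_2 \in \mathfrak{C}$, linearity of reversal yields $x^r = (1+v)c_1^r + vc_2^r$, which lies in $\mathfrak{C}$ because $c_i^r \in \mathcal{C}_i$ by hypothesis. For necessity, assume $\mathfrak{C}$ is reversible and take $c_1 \in \mathcal{C}_1$. Then $(1+v)c_1 \in (1+v)\mathcal{C}_1 \subseteq \mathfrak{C}$, so $((1+v)c_1)^r = (1+v)c_1^r$ also lies in $\mathfrak{C}$. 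Writing the latter as $(1+v)d_1 + vd_2$ with $d_i \in \mathcal{C}_i$ and multiplying by the idempotent $1+v$ (which annihilates $v$ and fixes $1+v$) gives $(1+v)c_1^r = (1+v)d_1$. The symmetric argument applied to an arbitrary $c_2 \in \mathcal{C}_2$, using $v$ in place of $1+v$, will handle $\mathcal{C}_2$.

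The only point that needs care is the passage from $(1+v)c_1^r = (1+v)d_1$ to $c_1^r = d_1 \in \mathcal{C}_1$, i.e., the injectivity of the map $c \mapsto (1+v)c$ from $\mathbb{F}_4^{m+n}$ into $\mathcal{R}^{m+n}$. Coordinatewise this reduces to the statement that $(1+v)a = 0$ in $\mathcal{R}$ forces $a = 0$ whenever $a \in \mathbb{F}_4$, which is immediate because $(1+v)a = a + va$ and $\{1, v\}$ is a free $\mathbb{F}_4$-basis of $\mathcal{R}$. This verification is the main (and essentially only) obstacle; once noted, the proof concludes in a few lines. An alternative, equally short route would be to observe directly that $\mathfrak{C}^r = (1+v)\mathcal{C}_1^r \oplus v\mathcal{C}_2^r$ by $\mathcal{R}$-linearity of reversal, and then invoke uniqueness of the idempotent decomposition to equate $\mathfrak{C}=\mathfrak{C}^r$ with $\mathcal{C}_1=\mathcal{C}_1^r$ and $\mathcal{C}_2=\mathcal{C}_2^r$ componentwise.
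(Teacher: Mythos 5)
Your proposal is correct and follows essentially the same route as the paper: both use that reversal is a coordinate permutation commuting with scalar multiplication, so $x^r=(1+v)c_1^r+vc_2^r$, and both deduce the necessity direction from the uniqueness of the idempotent decomposition (the paper writes this as $(1+v)[\mathbf{c}_1^r-\mathbf{a}_1]+v[\mathbf{c}_2^r-\mathbf{a}_2]=0$ forcing each bracket to vanish, which is exactly the injectivity point you isolate). Your explicit justification that $(1+v)a=0$ with $a\in\mathbb{F}_4$ forces $a=0$ is a welcome extra detail, but it is the same argument.
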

	\begin{proof}
		Let $\mathfrak{C}$ is reversible $\mathcal{R}$-double cyclic code and $\textbf{u}=(\alpha_{0},\ldots, \alpha_{m-1}\vert \beta_{0},\ldots, \beta_{n-1})\in \mathfrak{C}$. Since $\mathfrak{C}=(1+v)\mathcal{C}_1\oplus   v\mathcal{C}_2$ there exits $\textbf{c}_1=(p_{0},\ldots, p_{m-1}\vert q_{0},\ldots, q_{n-1})\in \mathcal{C}_1$ and $\textbf{c}_2=(r_{0},\ldots, r_{m-1}\vert s_{0},\ldots, s_{n-1})\in \mathcal{C}_2$ such that $\textbf{u}=(1+v)\textbf{c}_1+v\textbf{c}_2$ and $\alpha_i=(v+1)p_i+vr_i$ and $\beta_j=(v+1)q_j+vs_j$, where $0\leq i\leq m-1$ and $0\leq j\leq n-1$. Then 
		\begin{align}\label{eqn1}
			\notag\textbf{u}^r&=(\alpha_{0},\ldots, \alpha_{m-1}\vert \beta_{0},\ldots, \beta_{n-1})^r\\
			\notag&=(\alpha_{m-1},\ldots, \alpha_{0}\vert \beta_{n-1},\ldots, \beta_{0})\\
			\notag & = \Big(\big((v+1)p_{m-1}+vr_{m-1}\big), \ldots,\big((v+1)p_0+vr_0\big) \Big\vert \big((v+1)q_{n-1}+vs_{n-1} \big),\ldots,\big((v+1)q_0+vs_0 \big) \Big)           \\
			\notag&=(v+1)\big(p_{m-1},\ldots, p_0 \big\vert q_{n-1},\ldots,q_0\big)+v\big( q_{m-1},\ldots, q_0 \big\vert s_{n-1},\ldots,s_0\big)\\
			\textbf{u}^r&=(1+v)\textbf{c}^r_1+v\textbf{c}^r_2 
		\end{align}
		Since $\mathfrak{C}$ is reversible $ \textbf{u}^r=(1+v)\textbf{c}^r_1+v\textbf{c}^r_2 \in \mathfrak{C}$. Then there exists $\textbf{a}_1\in \mathcal{C}_1$ and $\textbf{a}_2\in \mathcal{C}_2$ such that
		\begin{align*}
			\textbf{u}^r=(1+v)\textbf{c}^r_1+v\textbf{c}^r_2=(1+v)\textbf{a}_1+v\textbf{a}_2
		\end{align*}
		which implies that 
		\begin{align}\label{eqn2}
			\notag\Big[(1+v)\textbf{c}^r_1+v\textbf{c}^r_2\Big]-        \Big[(1+v)\textbf{a}_1+v\textbf{a}_2\Big]&=0\\
			(1+v) \Big[\textbf{c}^r_1-\textbf{a}_1 \Big]+v\Big[\textbf{c}^r_2-\textbf{a}_2\Big]&=0
		\end{align}
		From Equation \ref{eqn2}, $ \textbf{c}^r_1-\textbf{a}_1=0$ and $\textbf{c}^r_2-\textbf{a}_2=0$. Thus $\textbf{c}^r_1=\textbf{a}_1\in \mathcal{C}_1$ and $\textbf{c}^r_2=\textbf{a}_2\in \mathcal{C}_2$. Hence $\mathcal{C}_1$ and $\mathcal{C}_2$ are reversible codes over $\mathbb{F}_4$.
		
		Conversely, assume $\mathcal{C}_1$ and $\mathcal{C}_2$ are reversible codes over $\mathbb{F}_4$. From Equation \ref{eqn1}, we have $\textbf{u}^r=(1+v)\textbf{c}^r_1+v\textbf{c}^r_2$. Since $\mathcal{C}_1$ and $\mathcal{C}_2$ are reversible, $\textbf{c}^r_1\in \mathcal{C}_1$ and $\textbf{c}^r_2\in \mathcal{C}_2$. Thus $\textbf{u}^r=(1+v)\textbf{c}^r_1+v\textbf{c}^r_2\in \mathfrak{C}$. Hence $\mathfrak{C}$ is reversible code over $\mathcal{R}$.
	\end{proof}
	
	\begin{example}
		Let $\mathcal{C}_1=\langle (x+1\vert 0),(\gamma+1\vert 1)\rangle$ and $\mathcal{C}_2=\langle (x+1\vert 0),(\gamma \vert x+1)\rangle$ be $\mathbb{F}_4$-double cyclic codes of length $(2,2)$. Since $\mathcal{C}_1$ and $\mathcal{C}_2$ satisfies all three conditions of Theorem \ref{thm3},  $\mathcal{C}_1$ and $\mathcal{C}_2$ are reversible codes. Hence by Theorem \ref{thm7}, $\mathfrak{C} = \big \langle(1 + v)(x+1) + v(x+1) \vert 0), ((1 +
		v)(\gamma +1) + v\gamma\big\vert (1 + v) + v(x+1)\big \rangle$ is reversible code.
	\end{example}

	\subsection{Reverse-Complement $\mathcal{R}$-double cyclic code}
	Now we discuss the reversible-complement $\mathcal{R}$-double cyclic code of length $(m,n)$. First, by observing Table \ref{Complement Table}, the following useful Lemmas are provided.
	
	\begin{lemma}\label{lemma3}
		Let $a,b\in \mathcal{R}$. Then
		\begin{enumerate}
			\item $a+\overline{a}=1$
			\item $\overline{(a+b)}=\overline{a}+\overline{b}+1$.
		\end{enumerate}
	\end{lemma}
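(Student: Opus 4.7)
The plan is to reduce both identities to a single structural observation about the complement map, namely that $\overline{a} = a + 1$ for every $a \in \mathcal{R}$. Once this closed form is established, both parts become one-line computations in the characteristic-two ring $\mathcal{R}$.

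The first step is to verify the formula $\overline{a} = a + 1$ by directly reading off Table \ref{Complement Table}. Going row by row, each listed pair $(a, \overline{a})$ differs precisely by the additive constant $1$: for instance $\overline{0} = 1$, $\overline{\gamma} = 1 + \gamma$, $\overline{v\gamma} = 1 + v\gamma$, $\overline{v + v\gamma} = 1 + v + v\gamma$, and so on for all sixteen elements. Rather than displaying all sixteen lines, I would state this as an observation and leave the table inspection to the reader. It is worth noting that this uniform formula is natural because the Watson--Crick complement on $S_{D_4}$ corresponds under $\psi$ to the involution $x \mapsto x + 1$ on $\mathbb{F}_4$ (one checks $\overline{0}=1$, $\overline{\gamma}=\gamma+1$), and this behavior lifts coordinatewise through the Gray map.

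With $\overline{a} = a + 1$ in hand, part (1) follows from
\begin{equation*}
a + \overline{a} \;=\; a + (a+1) \;=\; 2a + 1 \;=\; 1,
\end{equation*}
using that $\mathcal{R}$ has characteristic two. For part (2), I compute both sides:
\begin{equation*}
\overline{a + b} \;=\; (a+b) + 1, \qquad \overline{a} + \overline{b} + 1 \;=\; (a+1) + (b+1) + 1 \;=\; a + b + 1,
\end{equation*}
again invoking characteristic two to collapse $1 + 1 + 1$ to $1$. The two expressions coincide, completing the proof.

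There is no real obstacle here; the only nontrivial content is the table-level observation that complementation acts as translation by $1$. The significance of the lemma lies not in its difficulty but in its utility: the identity $\overline{a} = a + 1$ will almost certainly be the workhorse for the reverse-complement results that follow, so I would frame the proof to emphasize this formula explicitly (perhaps even recording it as a separate remark) rather than burying it inside the verification of (1) and (2).
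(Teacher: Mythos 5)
Your proof is correct and follows the same route the paper intends: the paper offers no written argument beyond ``by observing Table \ref{Complement Table},'' and your verification that $\overline{a}=a+1$ holds for all sixteen rows, followed by the characteristic-two computations, is exactly the content that observation compresses. Making the identity $\overline{a}=a+1$ explicit is a worthwhile refinement, since it is indeed the workhorse behind the reverse-complement results in Theorem \ref{thm8}.
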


	\begin{lemma}\label{lemma4}
		Let $\textbf{a},\textbf{b}\in \mathcal{R}^m\times \mathcal{R}^n$. Then
		\begin{enumerate}
			\item $\textbf{a}+\overline{\textbf{a}}=(\underbrace{1,\ldots, 1}_{\text{$m$}}\vert \underbrace{1,\ldots, 1}_{\text{$n$}})$
			\item $\overline{(\textbf{a}+\textbf{b})}=\overline{\textbf{a}}+\overline{\textbf{b}}+(\underbrace{1,\ldots, 1}_{\text{$m$}}\vert \underbrace{1,\ldots, 1}_{\text{$n$}})$.
		\end{enumerate}
	\end{lemma}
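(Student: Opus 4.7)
The plan is to reduce both statements to Lemma \ref{lemma3} applied coordinate-by-coordinate. Write $\textbf{a}=(a_0,\ldots,a_{m-1}\vert a'_0,\ldots,a'_{n-1})$ and $\textbf{b}=(b_0,\ldots,b_{m-1}\vert b'_0,\ldots,b'_{n-1})$ in $\mathcal{R}^m\times\mathcal{R}^n$. By the definition given just before Lemma \ref{lemma3}, the complement of a vector is taken entrywise, i.e.\ $\overline{\textbf{a}}=(\overline{a_0},\ldots,\overline{a_{m-1}}\vert \overline{a'_0},\ldots,\overline{a'_{n-1}})$, and addition in $\mathcal{R}^m\times\mathcal{R}^n$ is likewise componentwise. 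Hence any identity that holds in each coordinate automatically lifts to a vector identity.

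For part (1), I would compute $\textbf{a}+\overline{\textbf{a}}$ coordinate-wise. In the $i$th position of the first block the entry equals $a_i+\overline{a_i}$, which by Lemma \ref{lemma3}(1) is $1$; analogously the $j$th entry of the second block is $a'_j+\overline{a'_j}=1$. Collecting these yields the all-ones vector $(1,\ldots,1\vert 1,\ldots,1)$, as required.

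For part (2), I would again work componentwise. The $i$th coordinate of $\overline{\textbf{a}+\textbf{b}}$ in the first block is $\overline{a_i+b_i}$, which by Lemma \ref{lemma3}(2) equals $\overline{a_i}+\overline{b_i}+1$; an identical computation handles the second block. Reassembling the coordinates gives precisely $\overline{\textbf{a}}+\overline{\textbf{b}}+(1,\ldots,1\vert 1,\ldots,1)$.

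There is no real obstacle here: the lemma is a purely mechanical promotion of the scalar identities in Lemma \ref{lemma3} to the product module $\mathcal{R}^m\times\mathcal{R}^n$, and the only thing to be careful about is bookkeeping — making sure the complement and the sum are both interpreted componentwise and that the all-ones vector produced in each of the $m+n$ coordinates matches the stated right-hand side. No properties of double cyclic codes, the map $\psi$, or the ring structure beyond what is already encoded in Lemma \ref{lemma3} are needed.
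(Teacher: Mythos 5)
Your proof is correct and matches the paper's intent exactly: the paper states Lemma \ref{lemma4} without proof as an immediate consequence of Lemma \ref{lemma3}, and your coordinate-wise lifting is precisely the argument that is being left implicit.
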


	\begin{theorem}\label{thm8}
		Let $\mathfrak{C}=(1+v)\mathcal{C}_1\oplus v\mathcal{C}_2$ be a double cyclic code of length $(m,n)$ over $\mathcal{R}$. Then $\mathfrak{C}$ is a reversible-complement if and only if 
		\begin{enumerate}
			\item $\mathfrak{C}$ reversible code
			\item $(\underbrace{1,\ldots, 1}_{\text{$m$}}\vert \underbrace{1,\ldots, 1}_{\text{$n$}})\in \mathfrak{C}$.
		\end{enumerate}
		\begin{proof}
			Let $\mathfrak{C}$ be a reversible-complement double cyclic code of length $(m,n)$ over $\mathcal{R}$. Then $\overline{\textbf{u}^r}=(\overline{\alpha_{m-1}},\ldots, \overline{\alpha_{0}}\vert \overline{\beta_{n-1}},\ldots, \overline{\beta_{0}})\in \mathfrak{C}$ for all $\textbf{u}=(\alpha_{0},\ldots, \alpha_{m-1}\vert \beta_{0},\ldots, \beta_{n-1})\in \mathfrak{C}$. Since $(\underbrace{0,\ldots, 0}_{\text{$m$}}\vert \underbrace{0,\ldots, 0}_{\text{$n$}})\in \mathfrak{C}$ and its Watson-Crick Complement is also in $\mathfrak{C}$, we have $(\underbrace{1,\ldots, 1}_{\text{$m$}}\vert \underbrace{1,\ldots, 1}_{\text{$n$}})=(\underbrace{\overline{0},\ldots, \overline{0}}_{\text{$m$}}\vert \underbrace{\overline{0},\ldots, \overline{0}}_{\text{$n$}})\in \mathfrak{C}$. Let $\textbf{u}=(\alpha_{0},\ldots, \alpha_{m-1}\vert \beta_{0},\ldots, \beta_{n-1})\in \mathfrak{C}$. Then by Lemma \ref{lemma4}
			\begin{align*}
				\textbf{u}^r&=(\alpha_{m-1},\ldots, \alpha_{0}\vert \beta_{n-1},\ldots, \beta_{0})\\
				&=(\overline{\alpha_{m-1}},\ldots, \overline{\alpha_{0}}\vert \overline{\beta_{n-1}},\ldots, \overline{\beta_{0}})+(\underbrace{1,\ldots, 1}_{\text{$m$}}\vert \underbrace{1,\ldots, 1}_{\text{$n$}}) \in \mathfrak{C}.
			\end{align*}
			Conversely, assume  $\mathfrak{C}$ reversible code and $(\underbrace{1,\ldots, 1}_{\text{$m$}}\vert \underbrace{1,\ldots, 1}_{\text{$n$}})\in \mathfrak{C}$. Let $\textbf{w}=(\delta_{0},\ldots, \delta_{m-1}\vert \eta_{0},\ldots, \eta_{n-1})\in \mathfrak{C}$. Then $\textbf{w}^r=(\delta_{m-1},\ldots, \delta_{0}\vert \eta_{n-1},\ldots, \eta_{0})\in \mathfrak{C}$. By Lemma \ref{lemma4},
			\begin{align*}
				\overline{\textbf{w}^r}&=(\overline{\delta_{m-1}},\ldots, \overline{\delta_{0}}\vert \overline{\eta_{n-1}},\ldots, \overline{\eta_{0}})\\
				&=(\delta_{m-1},\ldots, \delta_{0}\vert \eta_{n-1},\ldots, \eta_{0})+(\underbrace{1,\ldots, 1}_{\text{$m$}}\vert \underbrace{1,\ldots, 1}_{\text{$n$}}) \in \mathfrak{C}
			\end{align*}
			Hence $\mathfrak{C}$ is reversible-complement.
			
		\end{proof}
	\end{theorem}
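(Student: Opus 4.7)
The plan is to prove both directions by leveraging Lemma \ref{lemma4}, in particular the identity $\mathbf{a}+\overline{\mathbf{a}}=(1,\ldots,1\,\vert\,1,\ldots,1)$, which lets me convert between a vector and its complement by adding the all-ones vector. The all-ones vector will serve as the ``bridge'' between reversibility and reversibility-complement, so the whole argument is essentially a bookkeeping exercise once that observation is made.

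For the forward direction, I will assume $\mathfrak{C}$ is reversible-complement. Since $\mathfrak{C}$ is linear, the zero codeword $(0,\ldots,0\,\vert\,0,\ldots,0)$ is in $\mathfrak{C}$; applying the reverse-complement to it produces $(\overline{0},\ldots,\overline{0}\,\vert\,\overline{0},\ldots,\overline{0}) = (1,\ldots,1\,\vert\,1,\ldots,1)$, which must therefore belong to $\mathfrak{C}$. This gives condition (2). For condition (1), I pick an arbitrary $\mathbf{u}\in\mathfrak{C}$, observe that $\overline{\mathbf{u}^r}\in\mathfrak{C}$ by hypothesis, and use Lemma \ref{lemma4}(1) to write $\mathbf{u}^r=\overline{\mathbf{u}^r}+(1,\ldots,1\,\vert\,1,\ldots,1)$. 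Both summands lie in $\mathfrak{C}$, so by linearity $\mathbf{u}^r\in\mathfrak{C}$, proving reversibility.

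For the converse, I assume $\mathfrak{C}$ is reversible and $(1,\ldots,1\,\vert\,1,\ldots,1)\in\mathfrak{C}$. Given any $\mathbf{w}\in\mathfrak{C}$, reversibility yields $\mathbf{w}^r\in\mathfrak{C}$, and then Lemma \ref{lemma4}(1) again gives $\overline{\mathbf{w}^r}=\mathbf{w}^r+(1,\ldots,1\,\vert\,1,\ldots,1)\in\mathfrak{C}$, so $\mathfrak{C}$ is reversible-complement.

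I do not anticipate a real obstacle here: the whole proof rests on the two algebraic identities packaged in Lemma \ref{lemma4} and on linearity of $\mathfrak{C}$. The only mild care needed is to make sure the decomposition $\mathfrak{C}=(1+v)\mathcal{C}_1\oplus v\mathcal{C}_2$ is not actually used in a nontrivial way; it does not need to be, because reversibility and complement closure are statements about the set $\mathfrak{C}$ itself, and the bridging identity from Lemma \ref{lemma4} works over $\mathcal{R}$ directly. So the proof will consist of the two short computations above, with explicit reference to Lemma \ref{lemma4} and to linearity of $\mathfrak{C}$.
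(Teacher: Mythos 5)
Your proposal is correct and follows essentially the same route as the paper's proof: the all-ones vector is obtained by applying the reverse-complement to the zero codeword, and Lemma \ref{lemma4}(1) together with linearity converts between $\mathbf{u}^r$ and $\overline{\mathbf{u}^r}$ in both directions. Your added remark that the decomposition $\mathfrak{C}=(1+v)\mathcal{C}_1\oplus v\mathcal{C}_2$ plays no role is accurate and a nice clarification, but the argument itself matches the paper's.
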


	\section{GC Weight}\label{Sec6}
	In molecular biology, GC-content (or guanine–cytosine content) is the percentage of nitrogenous bases on a DNA molecule that are either guanine(G) or cytosine(C). The chemical bonds between the WCC pairs are different where the G and C pair need three hydrogen bonds, while A and T pairs need two hydrogen bonds. The total energy of the DNA molecule depends on the number of A and T pairs and the number of C and G pairs. As DNA with high energy GC-content is more stable than DNA with low energy GC-content, it is always desirable in a DNA code to have all codewords with the same GC-content, so that they have similar melting temperatures. In this section we study the GC-content of a DNA cyclic code.

	Let $W_{GC}(\zeta)$ denote GC weight of $\zeta$, for any $\zeta \in \mathfrak{C}$ and $W_{GC}(\mathfrak{C})$ denote GC weight of $\mathfrak{C}$.  

	We first provide the definition of the Hamming weight enumerator from \cite{huffman2010fundamentals}.

	\begin{definition}
		Hamming weight enumerator, denoted by $W_{\mathfrak{C}}(x)$, of code $\mathfrak{C}$ is defined by
		\begin{equation*}
			W_{\mathfrak{C}}(x)=\sum_{\zeta\in \mathfrak{C}} 	x^{W_H(\zeta)}.
		\end{equation*}
	\end{definition}

	The Gray map $\phi$ from $\mathcal{R}$ to $\mathbb{F}_4$ defined in Equation \ref{eqn6} can be extended from $\mathcal{R}^{m+n}$ to $\mathbb{F}^{2m+2n}_4$. For any $\big((a_{1,0}+vb_{1,0}), \ldots, (a_{1,m-1}+vb_{1,m-1})\big\vert (a_{2,0}+vb_{2,0}), \ldots, (a_{2,n-1}+vb_{2,n-1})\big) \in \mathcal{R}^{m+n}$, extended Gray map $\Phi$ from  $\mathcal{R}^{m+n}$ to $\mathbb{F}^{2m+2n}_4$ is given by
 
	\begin{equation*}
	\Phi:\mathcal{R}^{m+n} \longrightarrow \mathbb{F}^{2m+2n}_4
	\end{equation*}
	\begin{align*}
		&\Big((a_{1,0}+vb_{1,0}), \ldots, (a_{1,m-1}+vb_{1,m-1})\Big\vert (a_{2,0}+vb_{2,0}), \ldots, (a_{2,n-1}+vb_{2,n-1})\Big)\\
		&\longmapsto \Big( \big(a_{1,0}+b_{1,0}, \ldots, 	a_{1,m-1}+b_{1,m-1}\big\vert a_{2,0}+b_{2,0}, \ldots, a_{2,n-1}+b_{2,n-1}\big),\big(a_{1,0}, \ldots, a_{1,m-1}\Big\vert a_{2,0}, \ldots, a_{2,n-1}\big)\Big).
	\end{align*}

	\begin{theorem}\label{thm9}
		Let $\mathfrak{C}$ be a double cyclic code of length $m+n$ over $\mathcal{R}$. Then GC weight of $\mathfrak{C}$ over $\mathcal{R}$ is given by
	\begin{equation*}
		W_{GC}(\mathfrak{C})=\sum_{\Phi(\zeta)\in \Phi(\mathfrak{C})} x^{W_H\big(\Phi(\zeta)\big)+W_H\Big(\Phi(\zeta)+\big(\underbrace{1,\ldots, 1}_{\text{$2m$}}\big\vert \underbrace{1,\ldots, 1}_{\text{$2n$}}\big)\Big)- (2m+2n)},
	\end{equation*}
	where $\Phi(\mathfrak{C})=\{\Phi(\zeta):\zeta \in \mathfrak{C}\}$.
	\end{theorem}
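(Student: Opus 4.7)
The plan is to reduce the GC weight $W_{GC}(\zeta)$ of a codeword $\zeta\in\mathfrak{C}$ to a sum of two Hamming weights of its extended Gray image $\Phi(\zeta)$, and then sum over all codewords. The key observation is that via the composed correspondence $\mathcal{R}\xrightarrow{\phi}\mathbb{F}_4^{2}$ followed by $0\leftrightarrow A,\ 1\leftrightarrow T,\ \gamma\leftrightarrow C,\ 1+\gamma\leftrightarrow G$, each of the $2(m+n)$ coordinates of $\Phi(\zeta)$ encodes exactly one DNA letter of $\theta(\zeta)$. Hence counting the $G$'s and $C$'s in $\theta(\zeta)$ is the same as counting those coordinates of $\Phi(\zeta)$ that lie in $\{\gamma,1+\gamma\}$.

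The first step is the single-coordinate identity: for every $c\in\mathbb{F}_4$,
\begin{equation*}
[\,c\in\{\gamma,1+\gamma\}\,] \;=\; W_H(c) + W_H(c+1) - 1.
\end{equation*}
One checks this on the four elements of $\mathbb{F}_4$: for $c\in\{0,1\}$ both sides equal $0$, while for $c\in\{\gamma,1+\gamma\}$ both sides equal $1$, since in those two cases both $c$ and $c+1$ are nonzero. Summing over the $2(m+n)$ coordinates of $\Phi(\zeta)$, and using that adding the all-ones vector shifts $c\mapsto c+1$ in each coordinate, yields
\begin{equation*}
W_{GC}(\zeta) \;=\; W_H(\Phi(\zeta)) + W_H\!\left(\Phi(\zeta) + \big(\underbrace{1,\ldots,1}_{2m}\,\big\vert\,\underbrace{1,\ldots,1}_{2n}\big)\right) - (2m+2n).
\end{equation*}

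Finally, substituting this exponent into $W_{GC}(\mathfrak{C}) = \sum_{\zeta\in\mathfrak{C}} x^{W_{GC}(\zeta)}$ and using the injectivity of $\Phi$ to reindex the sum over $\Phi(\mathfrak{C})$ gives the claimed formula. No step is genuinely difficult; the only real idea is the single-coordinate identity above, which recasts a DNA-alphabet count as a pair of Hamming weights in $\mathbb{F}_4$. The main thing to watch is the block structure, so that the all-ones vector is written with the correct split $(2m\,\vert\,2n)$ matching the partition of $\Phi(\mathfrak{C})\subseteq \mathbb{F}_4^{2m+2n}$.
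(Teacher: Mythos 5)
Your proof is correct and follows essentially the same route as the paper: both arguments come down to the observation that the coordinates of $\Phi(\zeta)$ lying in $\{\gamma,1+\gamma\}$ are exactly the $G$/$C$ positions, and that these can be counted as $W_H(\Phi(\zeta))+W_H(\Phi(\zeta)+\mathbf{1})-(2m+2n)$. The paper phrases this globally (counting the $0$'s and $1$'s among the coordinates and subtracting from $2m+2n$), while you package the same computation as a single-coordinate indicator identity; the content is identical.
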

	
	\begin{proof}
		Let $\zeta\in \mathfrak{C}$. Then 
		\begin{align}\label{eqn3}
			W_{GC}(\zeta)=2m+2n-\big[\text{Number of 0's in } \Phi(\zeta)+ \text{Number of 1's in } \Phi(\zeta)\big]
		\end{align}
		We have 
		\begin{equation}\label{eqn4}
			\text{Number of 0's in } \Phi(\zeta)= 2m+2n-W_H\big(\Phi(\zeta)\big)
		\end{equation}
		and 
		\begin{equation}\label{eqn5}
			\text{Number of 1's in } \Phi(\zeta)= 2m+2n-W_H\big(\Phi(\zeta)+(\underbrace{1,\ldots, 1}_{\text{$2m$}}\vert \underbrace{1,\ldots, 1}_{\text{$2n$}})\big)
		\end{equation}
		From Equation \ref{eqn3}, Equation \ref{eqn4} and Equation \ref{eqn5},
		\begin{align*}
			W_{GC}(\zeta)&=2m+2n-\Bigg[2m+2n-W_H\big(\Phi(\zeta)\big)+ 2m+2n-W_H\Big(\Phi(\zeta)+(\underbrace{1,\ldots, 1}_{\text{$2m$}}\vert \underbrace{1,\ldots, 1}_{\text{$2n$}})\Big)\Bigg]\\
			&=W_H\big(\Phi(\zeta)\big)+W_H\Big(\Phi(\zeta)+(\underbrace{1,\ldots, 1}_{\text{$2m$}}\vert \underbrace{1,\ldots, 1}_{\text{$2n$}})\Big)- (2m+2n)
		\end{align*}
		Then 
		\begin{equation*}
			W_{GC}(\mathfrak{C})=\sum_{\Phi(\zeta)\in \Phi(\mathfrak{C})} 	x^{W_H\big(\Phi(\zeta)\big)+W_H\Big(\Phi(\zeta)+\big(\underbrace{1,\ldots, 1}_{\text{$2m$}}\big\vert \underbrace{1,\ldots, 1}_{\text{$2n$}}\big)\Big)- 	(2m+2n)},
		\end{equation*}
		where $\Phi(\mathfrak{C})=\{\Phi(\zeta):\zeta \in \mathfrak{C}\}$.
	\end{proof}
	
%\hl{We get the following theorem from [23].}
%\begin{theorem}
%		Let $\mathfrak{C}=(1+v)\mathcal{C}_1\oplus v\mathcal{C}_2$ be a double cyclic code of length $m+n$ over $\mathcal{R}$. Then $\Phi(\mathfrak{C})$ has a minimal generating set $x^n\Pi+\Omega$ where $\Pi$, $\Omega$ are the minimal generating set of $\mathcal{C}_1$ and $\mathcal{C}_2$, respectively.
%\end{theorem}

    % \section{Conclusion}\label{Sec7}

	%References imported from bib file
	%%%%%%%%%%%%%%%%%%%%%%%%%%%%%%%%%%%%%%%%%%%%%%%
	% \bibliographystyle{ieeetr}
	% \bibliography{DNAReference}

\begin{thebibliography}{00}
    
    \bibitem{hammons1994z} A. R. Hammons, P. V. Kumar, A. R. Calderbank, N. J. Sloane, and P. Sol´e, “The $\mathbb{Z}_4$-linearity of Kerdock, Preparata, Goethals, and related codes,” IEEE Transactions on Information Theory, vol. 40, no. 2, pp. 301–319, 1994.
    
    \bibitem{Borges2018double} J. Borges, C. Fern´andez-C´ordoba, and R. Ten-Valls, “$\mathbb{Z}_2$-double cyclic codes,” Designs, Codes and Cryptography, vol. 86, p. 463–479, 2018.

    \bibitem{aydogdu2024double} I. Aydogdu, “On double cyclic codes over $\mathbb{Z}_2+u\mathbb{Z}_2$,” Advances in Mathematics of Communications, vol. 9, pp. 11076–11091, 2024.

    \bibitem{bathala2021some} S. Bathala and P. Seneviratne, “Some results on $\mathbb{F}_4[v]$-double cyclic codes,” Computational and Applied Mathematics, vol. 40, no. 2, p. 64, 2021.

    \bibitem{adleman1994molecular} L. M. Adleman, “Molecular computation of solutions to combinatorial problems,” Science, vol. 266, no. 5187, pp. 1021–1024, 1994.

    \bibitem{gaborit2005linear} P. Gaborit and O. D. King, “Linear constructions for DNA codes,” Theoretical Computer Science, vol. 334, no. 1-3, pp. 99–113, 2005.

    \bibitem{abualrub2006construction} T. Abualrub, A. Ghrayeb, and X. N. Zeng, “Construction of cyclic codes over $GF(4)$ for DNA computing,” Journal of the Franklin Institute, vol. 343, no. 4-5, pp. 448–457, 2006.

    \bibitem{siap2009cyclic} I. Siap, T. Abualrub, and A. Ghrayeb, “Cyclic DNA codes over the ring $\mathbb{F}_2[u]/\langle u^2-1\rangle$ based on the deletion distance,” Journal of the Franklin Institute, vol. 346, no. 8, pp. 731–740, 2009.

    \bibitem{guenda2013construction} K. Guenda and T. A. Gulliver, “Construction of cyclic codes over $\mathbb{F}_2+ u\mathbb{F}_2$ for DNA computing,” Applicable Algebra in Engineering, Communication and Computing, vol. 24, no. 6, pp. 445–459, 2013.

    \bibitem{yildiz2012cyclic} B. Yildiz and I. Siap, “Cyclic codes over $\mathbb{F}_2[u]/\langle u^4-1\rangle$ and applications to DNA codes,” Computers \& Mathematics with Applications, vol. 63, no. 7, pp. 1169–1176, 2012.

    \bibitem{bayram2016codes} A. Bayram, E. S. Oztas, and I. Siap, “Codes over $\mathbb{F}_4 + v\mathbb{F}_4$ and some DNA applications,” Designs, Codes and Cryptography, vol. 80, no. 2, pp. 379–393, 2016.

    \bibitem{NabilBennenni} N. Bennenni, K. Guenda, and S. Mesnager, “Dna cyclic codes over rings,” 2017.
    
    \bibitem{dinh2018cyclic} H. Q. Dinh, A. K. Singh, S. Pattanayak, and S. Sriboonchitta, “Cyclic DNA codes over the ring $\mathbb{F}_2 +u\mathbb{F}_2+v\mathbb{F}_2 +uv\mathbb{F}_2 +v^2\mathbb{F}_2 +uv^2\mathbb{F}_2$,” Designs, Codes and Cryptography, vol. 86, no. 7, pp. 1451–1467, 2018.
    
    \bibitem{dinh2018dna} H. Q. Dinh, A. K. Singh, S. Pattanayak, and S. Sriboonchitta, “DNA cyclic codes over the ring $\mathbb{F}_2[u, v]/\langle u^2 -1, v^3 - v, uv - vu\rangle$,” International Journal of Biomathematics, vol. 11, no. 03, p. 1850042, 2018.

    \bibitem{liu2020dna} J. Liu and H. Liu, “Dna codes over the ring $\mathbb{F}_4[u]/\langle u^3\rangle$,” IEEE Access, vol. 8, pp. 77528–77534, 2020.

    \bibitem{kanlaya2023constructing} A. Kanlaya and C. Klin-Eam, “Constructing double cyclic codes over $\mathbb{F}_2 + u\mathbb{F}_2$ for DNA codes,” Journal of Computational Biology, vol. 1, no. 1, pp. 1112–1130, 2023.

    \bibitem{kanlaya2025constructing} A. Kanlaya and C. Klin-Eam, “Constructing DNA codes using double cyclic codes of odd and even lengths over $\mathbb{F}_2 + u\mathbb{F}_2$,” Carpathian Journal of Mathematics, vol. 41, no. 1, pp. 137–156, 2025.

    \bibitem{borges2018z} J. Borges, C. Fern´andez-C´ordoba, and R. Ten-Valls, “Z2-double cyclic codes,” Designs, Codes and Cryptography, vol. 86, no. 3, pp. 463–479, 2018.
    
    \bibitem{patanker2023reversible} N. Patanker, “On reversible $\mathbb{Z}_2$-double cyclic codes,” Bulletin of the Korean Mathematical Society, vol. 60, no. 2, pp. 443–460, 2023.

    \bibitem{oztas2013lifted} E. S. Oztas and I. Siap, “Lifted polynomials over $F_{16}$ and their applications to DNA codes,” Filomat, vol. 27, no. 3, pp. 459–466, 2013.

    \bibitem{huffman2010fundamentals} W. C. Huffman and V. Pless, Fundamentals of error-correcting codes. Cambridge University Press, 2010.

    \end{thebibliography}
	%%%%%%%%%%%%%%%%%%%%%%%%%%%%%%%%%%%%%%%%%%%%%%%%%
    
\end{document}